\newtheorem{theorem}{Theorem} 
\newtheorem{corollary}[theorem]{Corollary}
\newtheorem{lemma}[theorem]{Lemma}
\newtheorem{example}[theorem]{Example}
\newenvironment {proof} {{\it
Proof.}}{\hspace*{\fill}$\Box$\par\vspace{4mm}}
\newfont{\bb}{msbm10}
\def\:{\! :\!}
\newcommand\0{{\bf 0}}
\begin{document}

\title{Symmetric, Hankel-symmetric, and Centrosymmetric Doubly Stochastic Matrices}

 \author{Richard A. Brualdi\\
 Department of Mathematics\\
 University of Wisconsin\\
 Madison, WI 53706\\
 {\tt brualdi@math.wisc.edu}
 \and
 Lei Cao\\
 Department of Mathematics\\
 Georgian Court University\\
Lakewood, New Jersey 08701\\
 {\tt leicaomath@gmail.com}
 }

\maketitle

 \begin{abstract} 
We investigate convex polytopes of doubly stochastic matrices having special structures: symmetric, Hankel symmetric, centrosymmetric, and both symmetric and Hankel symmetric. We determine dimensions of these polytopes and classify their extreme points. We also determine a basis of the real vector spaces generated by permutation matrices with these special structures.

\medskip
\noindent {\bf Key words and phrases: matrix, permutation matrix, symmetric, Hankel-symmetric, centrosymmetric, doubly stochastic, extreme point.} 

\noindent {\bf Mathematics  Subject Classifications: 05C50, 15B05, 15B51,15B48, 90C57.} 
\end{abstract}

\section{Introduction}
Let $n$ be a positive integer. Let ${\mathcal S}_n$ be the set of permutations of $\{1,2,\ldots,n\}$, and  let ${\mathcal P}_n$ be the set of $n\times n$ permutation matrices.
The permutation $\sigma=(j_1,j_2,\ldots,j_n)$ is associated in the usual way to the permutation matrix $P=[p_{ij}]$ where $p_{1j_1}=p_{2j_2}=\ldots=p_{nj_n}=1$, and all other $p_{ij}=0$. We usually identify a permutation in ${\mathcal S}_n$ and its corresponding permutation matrix in ${\mathcal P}_n$.

The convex hull of ${\mathcal P}_n$ is the well-studied polytope $\Omega_n$ of dimension $(n-1)^2$  consisting of all the $n\times n$ {\it doubly stochastic matrices}, that is, matrices with nonnegative entries having all row and column sums equal to 1. Since a permutation matrix $P$ cannot be expressed as a convex combination of permutation matrices different from $P$, the set ${\mathcal E}(\Omega_n)$ of extreme points of $\Omega_n$ is ${\mathcal P}_n$. Now let $\Omega_n^t$ denote the set of $n\times n$ symmetric doubly stochastic matrices\footnote{We use the superscript `t' to reflect the fact that symmetric matrices are invariant under transposition.}. Then $\Omega_n^t$ is also a convex polytope and has dimension equal to $n\choose 2$. Its set ${\mathcal E}(\Omega_n^t)$ of extreme points includes the set ${\mathcal P}_n^t$ of  $n\times n$ symmetric permutation matrices but there are other extreme points as well. In fact, ${\mathcal E}(\Omega_n^t)$ consists of all $n\times n$ matrices which, after simultaneous permutations of its rows and columns, that is, replacing a matrix $X$ with $PXP^t$ for some permutation matrix $P$, are direct sums of matrices of three types:
\begin{equation}
\label{eq:T_2}
I_1=\left[\begin{array}{c} 1\end{array}\right], T_2=\left[\begin{array}{cc} 0&1\\ 1&0\end{array}\right],\end{equation}
and, with $k\ge 3$ odd,  the $k\times k$ matrices
\begin{equation}\label{eq:C_k}
C_k=\frac{1}{2}\left[\begin{array}{c|c|c|c|c|c|c}
&1&&&&&1\\  \hline
1&&1&&&&\\  \hline
&1&&1&&&\\ \hline
&&1&&\ddots&&\\ \hline
&&&\ddots&&\ddots&\\ \hline
&&&&1&&1\\ \hline
1&&&&&1&\end{array}\right].\end{equation}
Here, as later, entries in blank positions are zero. These and other facts about doubly stochastic matrices can be found in Chapter 9 of \cite{RAB}. With an $n\times n$ symmetric matrix $S=[s_{ij}]$ there is associated a {\it loopy graph} $G(S)$ with vertex set $V=\{1,2,\ldots,n\}$ and an edge joining vertices $i$ and $j$ exactly when $s_{ij}\ne 0$. Note that if $s_{ii}\ne 0$, then there is a {\it loop} at vertex $i$, that is, an edge joining vertex $i$ to itself.  The loopy graphs of the extreme points of $\Omega_n^t$ are characterized by having connected components which are either loops (corresponding to $I_1$ in (\ref{eq:T_2})), edges (corresponding to $T_2$ in (\ref{eq:T_2})), or  odd cycles of length $k\ge 3$ (corresponding to $C_k$ in (\ref{eq:C_k})). Such loopy graphs determine a unique extreme point of $\Omega_n^t$. The following lemma follows from our discussion above but it can also be used to characterize the extreme points of $\Omega_n^t$.  We outline the direct argument
\cite{AC}  since we shall generalize it later,

\begin{lemma}\label{lem:symext}
 All the extreme points of $\Omega_n^t$ are of the form $\frac{1}{2}(P+P^t)$ 
where $P$ is a permutation matrix. If $\frac{1}{2}(P+P^t)$  is not an extreme point of $\Omega_n^t$, then there exist  symmetric permutation matrices $Q_1$
and $Q_2$  such that $P+P^t=Q_1+Q_2$.\hfill{$\Box$}
\end{lemma}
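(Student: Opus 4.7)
The plan is to establish the first assertion by a Birkhoff–plus–symmetrization argument and the second by a cycle-decomposition of $P$.

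For the first assertion, I would take $A \in \mathcal{E}(\Omega_n^t)$ and apply Birkhoff's theorem to write $A = \sum_{i=1}^m c_i P_i$ with $c_i > 0$, $\sum c_i = 1$, and each $P_i \in \mathcal{P}_n$. Since $A = A^t$, transposing and then averaging with the original expansion gives
\[
A \;=\; \sum_{i=1}^m c_i \cdot \tfrac{1}{2}(P_i + P_i^t),
\]
which is still a convex combination of elements of $\Omega_n^t$. Extremality of $A$ forces every summand to equal $A$, so $A = \tfrac{1}{2}(P + P^t)$ for any $P = P_i$.

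For the second assertion, my strategy is to analyze $P + P^t$ cycle by cycle. Writing the permutation $\sigma$ corresponding to $P$ as a product of disjoint cycles and simultaneously permuting rows and columns, $P + P^t$ becomes block-diagonal with one block per cycle of $\sigma$: a $1 \times 1$ block $[\,2\,]$ per fixed point, a $2 \times 2$ block $2T_2$ per 2-cycle, and, for each $k$-cycle with $k \ge 3$, a $k \times k$ block whose non-zero support traces a $k$-cycle graph carrying $1$ on every edge. I would then split each block as a sum of two symmetric permutation blocks: trivially $[\,2\,] = [\,1\,] + [\,1\,]$ and $2T_2 = T_2 + T_2$; and for an even $k$-cycle with $k \ge 4$, the $k$-cycle graph partitions into two alternating perfect matchings, each of which is a symmetric permutation matrix block on that cycle. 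Reassembling the per-block splittings produces the desired symmetric permutation matrices $Q_1, Q_2$ with $Q_1 + Q_2 = P + P^t$.

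The main obstacle is that a $k$-cycle block with $k \ge 3$ odd yields the extreme matrix $C_k$ of~(\ref{eq:C_k}), and its underlying $k$-cycle graph admits no partition into two perfect matchings. The argument must lean on the non-extremality hypothesis, together with the classification in~(\ref{eq:T_2}) and~(\ref{eq:C_k}), to guarantee that at least one even cycle of length $\ge 4$ appears in $\sigma$, and to show that the two-matching splitting of that even-cycle block is what supplies the decomposition $P + P^t = Q_1 + Q_2$ needed to conclude.
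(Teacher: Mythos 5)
Your treatment of the first assertion is exactly the paper's: expand $A$ by Birkhoff's theorem, transpose, average, and conclude that every extreme point has the form $\frac{1}{2}(P+P^t)$. For the second assertion you also follow the paper's route --- decompose $\sigma$ into disjoint cycles, block-diagonalize $P+P^t$, and try to split each block into two symmetric permutation blocks --- and you correctly isolate the odd cycles of length $\ge 3$ as the obstruction. But the gap you flag in your last paragraph is genuine and cannot be closed the way you propose. Non-extremality does force an even cycle of length $\ge 4$ (otherwise the blocks are $2I_1$, $2T_2$, and $2C_k$ as in (\ref{eq:T_2})--(\ref{eq:C_k}), whose direct sum is extreme), but splitting that one block into its two alternating matchings does nothing for the other blocks. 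If $\sigma$ also contains an odd cycle of length $\ge 3$, the corresponding block of $P+P^t$ is the adjacency matrix of an odd cycle graph with zero diagonal, and it dominates no symmetric permutation matrix whatsoever; since any nonnegative $Q_1,Q_2$ with $Q_1+Q_2=P+P^t$ satisfy $Q_i\le P+P^t$ entrywise and hence respect the block structure, no decomposition into symmetric permutation matrices exists. Concretely, for $\sigma=(1\,2\,3\,4)(5\,6\,7)$ in ${\mathcal S}_7$, the matrix $\frac{1}{2}(P+P^t)$ is not extreme (the $4$-cycle block is the average of its two perfect matchings), yet $P+P^t\ne Q_1+Q_2$ for any symmetric permutation matrices $Q_1,Q_2$.

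This means the failure is in the statement as literally written (and in the paper's own one-line justification, which tacitly treats the case where the even cycle of length $\ge 4$ is the only cycle needing a nontrivial split), not something your reassembly step could repair. What your block analysis actually proves, and what should be recorded, is: $\frac{1}{2}(P+P^t)$ is extreme if and only if every cycle of $P$ has length $1$, $2$, or odd; and when it is not extreme it is the average of two distinct matrices of $\Omega_n^t$, each of the form $\frac{1}{2}(Q+Q^t)$, obtained by replacing one even-cycle block by each of its two perfect matchings. The literal conclusion $P+P^t=Q_1+Q_2$ with $Q_1,Q_2$ symmetric permutation matrices holds exactly when, in addition, $P$ has no odd cycle of length $\ge 3$; under that extra hypothesis your per-block splitting and reassembly go through verbatim. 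You should either add that hypothesis or weaken the conclusion; as it stands, the final step of your plan ("the two-matching splitting of that even-cycle block supplies the decomposition") would fail on the example above.
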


\begin{proof}
Let $A\in \Omega_n^t$. Since $A\in \Omega_n$, $A$ is a convex combination of $n\times n$ permutation matrices, $A=\sum_{i=1}^kc_iP_i$ with the $c_i> 0$ and $\sum_{i=1}^k c_i=1$. Taking the transpose of this equation and using the fact that $A$ is symmetric, we get
\[A=\sum_{i=1}^kc_i \left(\frac{1}{2}(P_i+P_i^t)\right).\]
Thus all the extreme points of $\Omega_n^t$ are of the form $\frac{1}{2}(P+P^t)$ for some permutation matrix $P$. If $P$ has an even permutation cycle of length at least 4, then it is easy to see that $\frac{1}{2}(P+P^t)$  is not an extreme point since then there exist symmetric permutation matrices $Q_1$ and $Q_2$ such that $P+P^t =Q_1+Q_2$ (see the next example). If all cycles of $P$ have length 2 or are odd, then it is easily checked that $\frac{1}{2}(P+P^t)$  is an  extreme point (see (\ref{eq:C_k})).
\end{proof}

\begin{example}\label{ex:sym-even}{\rm
If $n=6$ and 
\[P=\left[\begin{array}{c|c|c|c|c|c}
&1&&&&\\ \hline
&&1&&&\\ \hline
&&&1&&\\ \hline
&&&&1&\\ \hline
&&&&&1\\ \hline
1&&&&&\end{array}\right],\mbox{ then }\]
\[P+P^t=\left[\begin{array}{c|c|c|c|c|c}
&1&&&&1\\ \hline
1&&1&&&\\ \hline
&1&&1&&\\ \hline
&&1&&1&\\ \hline
&&&1&&1\\ \hline
1&&&&1&\end{array}\right]=
\left[\begin{array}{c|c|c|c|c|c}
&1&&&&\\ \hline
1&&&&&\\ \hline
&&&1&&\\ \hline
&&1&&&\\ \hline
&&&&&1\\ \hline
&&&&1&\end{array}\right]+
\left[\begin{array}{c|c|c|c|c|c}
&&&&&1\\ \hline
&&1&&&\\ \hline
&1&&&&\\ \hline
&&&&1&\\ \hline
&&&1&&\\ \hline
1&&&&&\end{array}\right].\]\hfill{$\Box$}

}\end{example}

The polytope $\Omega_n^t$  has dimension $n\choose 2$ (entries above the main diagonal of an $n\times n$ symmetric matrix can be chosen arbitrarily but small and this results in  a unique matrix in  $\Omega_n^t$), and this implies that the real {\it vector space ${\Re}({\mathcal P}_n^t)$ spanned by the symmetric permutation matrices} has dimension equal to ${n\choose 2}+1$. A basis of this vector space consists of the $n\choose 2$ $n\times n$ permutation matrices corresponding to transpositions along with the identity matrix $I_n$.

Symmetric matrices $A$ are those matrices that are invariant under transposition ($A^t=A$), that is, matrices invariant under a reflection about the main diagonal. In \cite{BF} an analogous property, invariance under a reflection about the  so-called antidiagonal, renamed the {\it Hankel-diagonal}\ \footnote{The motivation for calling this the Hankel-diagonal
was that Hankel matrices are constant on the antidiagonal and on all diagonals parallel to it.  In contrast, Toeplitz matrices are matrices constant on the main diagonal and all diagonals parallel to it. We use $A^h$ to denote the matrix obtained from a square matrix $A$ by reflection about the Hankel-diagonal and are tempted to think of the main diagonal as the Toeplitz diagonal and the  `t' in $A^t$ to stand for Toeplitz.}, was considered in \cite{BF}. Thus the
 {\it Hankel diagonal} of an $n\times n$ matrix $A=[a_{ij}]$ is the set of positions $\{(i,n+1-i):1\le i\le n\}$. The {\it Hankel transpose} $A^h=[a_{ij}^h]$ of an $n\times n$ matrix $A$ satisfies  $a_{ij}^h=a_{n+1-j, n+1-i}$ for all $i$ and $j$. The matrix $A$ is called {\it Hankel symmetric}\footnote{Also called {\it persymmetric} but we prefer Hankel symmetric.} provided $A^h=A$, that is, 
$a_{ij}=a_{n+1-j, n+1-i}$ for all $i$ and $j$. The set of $n\times n$ Hankel symmetric permutation matrices is denoted by ${\mathcal P}_n^h$.  We denote the $n\times n$ permutation matrix whose 1's are on the Hankel-diagonal by $L_n$ and call it the {\it Hankel identity matrix}.

\begin{example}\label{ex:hankelsym}{\rm
The following matrices are Hankel symmetric permutation matrices:
\[P=\left[\begin{array}{ccc}
0&1&0\\
0&0&1\\
1&0&0\end{array}\right] \mbox{ and }
Q=\left[\begin{array}{cccc}
0&1&0&0\\
1&0&0&0\\
0&0&0&1\\
0&0&1&0\end{array}\right].\]
The matrix $Q$ is also symmetric: $Q^t=Q^h=Q$. As seen with $Q$,  an entry off both the main diagonal and Hankel diagonal of a symmetric, Hankel symmetric matrix determines three other entries; an entry on the main diagonal or Hankel diagonal determines one other entry, unless it is on both diagonals (in which case the matrix has odd order $n$ and the entry is in the {\it central position} $((n+1)/2,(n+1)/2)$).
\hfill{$\Box$}
}\end{example}

The set of Hankel symmetric doubly stochastic matrices also forms a
convex polytope  $\Omega_n^h$  of dimension $n\choose 2$,  the characterization of  whose set ${\mathcal E}(\Omega_n^h)$ of extreme points follows easily from that of 
${\mathcal E}(\Omega_n^t)$. The matrix $C_k$ in (\ref{eq:C_k}) is also Hankel symmetric. After a {\it simultaneous Hankel permutation} of the rows and columns (that is, replacing a matrix $X$ with $PXP^h$ for some permutation matrix $P$ and its Hankel transpose $P^h$), the extreme points of $\Omega_n^h$ are the {\it Hankel direct sums} (direct sums with respect to the Hankel diagonal) of matrices $I_1$, $I_2$ (the Hankel counterpart to $T_2$), and $C_k$ for odd $k\ge 3$. The analogue of  Lemma \ref{lem:symext} for Hankel symmetric doubly stochastic matrices is given next, and it follows from that lemma using the above observations.

\begin{lemma}\label{lem:hankelext}
 All the extreme points of $\Omega_n^h$ are of the form $\frac{1}{2}(P+P^h)$ 
where $P$ is a permutation matrix. If $\frac{1}{2}(P+P^h)$  is not an extreme point of $\Omega_n^h$, then there exist  Hankel symmetric permutation matrices $Q_1$
and $Q_2$  with $P+P^h=Q_1+Q_2$.\hfill{$\Box$}
\end{lemma}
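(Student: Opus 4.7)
The plan is to reduce the statement to Lemma \ref{lem:symext} via the map $\Phi: A \mapsto L_n A$, which I claim is a linear involution on the set of $n \times n$ doubly stochastic matrices that interchanges the subsets of symmetric and Hankel-symmetric matrices. The key identity is $A^h = L_n A^t L_n$, so multiplying on the left by $L_n$ (and using $L_n^2 = I_n$ and $L_n^t = L_n$) gives $L_n A^h = A^t L_n = (L_n A)^t$; hence $L_n A$ is symmetric if and only if $A$ is Hankel symmetric. Since left multiplication by the permutation matrix $L_n$ merely permutes rows, $\Phi$ preserves row and column sums, so it restricts to a linear bijection $\Omega_n^h \to \Omega_n^t$ whose inverse is itself, and therefore carries extreme points to extreme points.

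Next I would exploit the fact that $\Phi$ also sends permutation matrices to permutation matrices. For any permutation matrix $P$, setting $Q = L_n P$ yields
\[
\Phi\!\left(\tfrac{1}{2}(P + P^h)\right) = \tfrac{1}{2}(L_n P + L_n P^h) = \tfrac{1}{2}(Q + Q^t),
\]
using the identity $L_n P^h = (L_n P)^t$ just derived. By Lemma \ref{lem:symext}, every extreme point of $\Omega_n^t$ has the form $\tfrac{1}{2}(Q + Q^t)$ for some permutation matrix $Q$. Applying $\Phi^{-1} = \Phi$ to both sides then shows every extreme point of $\Omega_n^h$ has the form $\tfrac{1}{2}(P + P^h)$ with $P = L_n Q$, establishing the first assertion.

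For the decomposition in the second assertion, suppose $\tfrac{1}{2}(P + P^h)$ is not extreme in $\Omega_n^h$. Then $\tfrac{1}{2}(Q + Q^t)$ is not extreme in $\Omega_n^t$, so Lemma \ref{lem:symext} produces symmetric permutation matrices $R_1, R_2$ with $Q + Q^t = R_1 + R_2$. Multiplying on the left by $L_n$ gives $P + P^h = L_n R_1 + L_n R_2$. Each $L_n R_i$ is a permutation matrix, and the same calculation that showed $\Phi$ swaps the two symmetry conditions shows $L_n R_i$ is Hankel symmetric precisely because $R_i$ is symmetric. Setting $Q_i := L_n R_i$ therefore yields the desired Hankel-symmetric decomposition $P + P^h = Q_1 + Q_2$.

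I do not anticipate a serious obstacle: the only delicate step is the algebraic verification that conjugation/multiplication by $L_n$ really does interchange the symmetric and Hankel-symmetric conditions and convert $P + P^h$ into $Q + Q^t$. Once this is in hand, the statement transfers mechanically from Lemma \ref{lem:symext}, and no combinatorial analysis of cycle structures (as done explicitly in the proof of Lemma \ref{lem:symext}) needs to be redone on the Hankel side.
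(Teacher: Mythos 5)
Your proposal is correct and follows the same route the paper intends: the paper simply asserts that Lemma \ref{lem:hankelext} ``follows from'' Lemma \ref{lem:symext} via the correspondence between symmetric and Hankel-symmetric structure, and your map $A\mapsto L_nA$ (with the identity $A^h=L_nA^tL_n$) is precisely the explicit affine bijection $\Omega_n^h\to\Omega_n^t$ that makes this transfer rigorous. All the verifications (that $L_nA$ is symmetric iff $A$ is Hankel symmetric, that $\Phi$ sends $\tfrac12(P+P^h)$ to $\tfrac12(Q+Q^t)$ with $Q=L_nP$, and that $L_nR_i$ is Hankel symmetric when $R_i$ is symmetric) check out.
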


As with $\Omega_n^t$, the polytope $\Omega_n^h$  has dimension $n\choose 2$ implying that the vector space ${\mathbb Q}({\mathcal P}_n^t)$ spanned by the Hankel symmetric permutation matrices has dimension equal to ${n\choose 2}+1$. A basis of this vector space consists of the $n\times n$ permutation matrices corresponding to {\it Hankel transpositions} (transpositions about the Hankel diagonal) along with the Hankel identity matrix $L_n$.

There is a third, and related, invariance property that we consider. Let $A=[a_{ij}]$ be an $n\times n$ matrix. Then $A^t$ is a reflection of  $A$ about the main diagonal ($(i,j)\rightarrow (j,i)$), and $A^h$ is a reflection of $A$ about the Hankel-diagonal ($(i,j)\rightarrow (n+1-j,n+1-i)$). Thus $(A^t)^h$
is a rotation of $A$ by 180 degrees ($(i,j)\rightarrow (n+1-i,n+1-j)$). We denote the matrix obtained from $A$ by a rotation of 180 degrees by $A^{\pi}$. A matrix $A$ for which $A^{\pi}=A$,  that is, $A$ is  invariant under a 180 degree rotation, is called {\it centrosymmetric}.
If a matrix is both symmetric and Hankel symmetric, then it is also centrosymmetric.  If $P_{\sigma}=[p_{ij}]$ is the permutation matrix corresponding to a permutation $\sigma$ of $\{1,2,\ldots,n\}$, then $P_{\sigma}$ and $\sigma$ are each called  {\it centrosymmetric} provided
$\sigma(i)+\sigma(n+1-i)=n+1$ for $1\le i\le n$, equivalently, $p_{ij}=p_{n+1-i,n+1-j}$ for $1\le i,j\le n$. Let ${\mathcal S}_n^{\pi}$ denote the set of centrosymmetric permutations in ${\mathcal S}_n$, and let ${\mathcal P}_n^{\pi}$ denote the corresponding set of $n\times n$  centrosymmetric permutation matrices in ${\mathcal P}_n$.  

\begin{example}{\rm \label{ex:extra}
Let $n=4$. An example of a centrosymmetric permutation matrix
which is neither symmetric nor Hankel symmetric is
\[\left[\begin{array}{c|c|c|c}
&1&&\\ \hline
&&&1\\ \hline
1&&&\\ \hline
&&1&\end{array}\right].\]
}
\hfill{$\Box$}\end{example}

This paper is organized as follows. In the next section we investigate properties of   the set ${\mathcal P}_n^{\pi}$ of $n\times n$ centrosymmetric permutation matrices and the convex polytope ${\Omega}_n^{\pi}$ of $n\times n$ {\it centrosymmetric doubly stochastic matrices}. In the subsequent section we are concerned with the set ${\mathcal P}_n^{t\& h}$ 
of $n\times n$ {\it symmetric and Hankel symmetric permutation matrices} and the convex polytope ${\Omega}_n^{t\& h}$ of $n\times n$ {\it symmetric and Hankel-symmetric doubly stochastic matrices}. We have that  ${\Omega}_n^{t\& h}$  forms a  subpolytope  of $\Omega_n^{\pi}$ which also forms a subpolytope of $\Omega_n$.

Our paper is partly expository in nature in that, for the convenience of the reader, we attempt to collect the relevant facts that motivated our investigations.

\section{Centrosymmetric Matrices}

Let $P=[p_{ij}]$ be an $n\times n$ centrosymmetric permutation matrix. If $n$ is even, then $P$ is determined by its $\frac{n}{2}\times n$ submatrix formed by its first $\frac{n}{2}$ rows; moreover, in this submatrix, there is exactly one 1 in the union of columns $i$ and $n+1-i$ for each $i=1,2,\ldots,\frac{n}{2}$. It  follows that when $n=2m$  is even, the number of $n\times n$ centrosymmetric permutation matrices equals $2^mm!$ \cite{BBS}. If $n=2m+1$ is odd, then an $n\times n$ centrosymmetric permutation matrix has a 1 in the central position $(m+1,m+1)$, and deleting row and column $m+1$ results in a $2m\times 2m$ centrosymmetric permutation matrix. Thus the number of centrosymmetric permutation matrices when $n=2m+1$ is also $2^mm!$.

We collect the  following elementary properties which are easily verified:
\begin{itemize}
\item[\rm(i)] $(A^{t})^t=(A^{h})^h=(A^{\pi})^{\pi}=A$
\item[\rm (ii)] $A^{\pi}=(A^{t})^h=(A^{h})^t$.
\item[\rm (iii)] $(A^{\pi})^t=A^h$, $(A^{\pi})^h=A^t$.
\item[\rm (iv)] Any two of $A^t=A$, $A^h=A$, and $A^{\pi}=A$ implies the other.
\item[\rm (v)] $A^t=A^h$ if and only if $A^{\pi}=A$, that is, a matrix is centrosymmetric if and only if its transpose equals its Hankel-transpose.
\item[\rm (vi)] If $A$ and $B$  are $n\times n$ matrices, then $(AB)^{\pi}=B^{\pi}A^{\pi}$. In particular, if $P$ is a centrosymmetric permutation matrix, then $(PAP^{\pi})^{\pi} =PA^{\pi}P^{\pi}$, and thus if $A$ is centrosymmetric, then $PAP^{\pi}$ is also centrosymmetric. We say that such a matrix $PAP^{\pi}$ is obtained from $A$ by a {\it simultaneous centrosymmetric-permutation} of rows and columns. 
\item[\rm (vii)]  Let $n=2m$ be even. Then 
${\mathcal P}_{2m}^{\pi}$ is a subgroup of the group   ${\mathcal P}_{2m}$ of $2m\times 2m$ permutation matrices under multiplication and is  isomorphic to the {\it hyperoctahedral group} ${\mathcal B}_{m}$. Recall that ${\mathcal B}_{m}$ is the group of {\it signed permutations} 
defined to be the permutations $\rho$ of $\{\pm 1, \pm 2, \ldots, \pm m\}$ such that $\rho(-i)=-\rho(i)$ for $i=1,2,\ldots,m$. This group can be identified with the multiplicative group ${\mathcal P}_m^{\pm}$ of $m\times m$ {\it signed permutation matrices}, that is, matrices $DP$ where $D$ is an $m\times m$ diagonal matrix with $\pm 1$'s on the main diagonal and $P$ is a  permutation matrix.  An isomorphism is given by
$\theta: {\mathcal S}_{2m}^{\pi}\rightarrow {\mathcal B}_{m}$ where for $\sigma\in {\mathcal S}_{2m}^{\pi}$ and $1\le i\le m$,
 \[\theta(\sigma)(i)=\left\{
\begin{array}{ll} \sigma(m+i)-m & \mbox{ if $\sigma(m+i)>m$}\\
\sigma(m+i)-m-1,&  \mbox{ otherwise.}
\end{array}\right. \]
In terms of our centrosymmetric permutation matrices, we have
$\theta:  {\mathcal P}_{2m}^{\pi}\rightarrow {\mathcal B}_m$ where
  $\theta(P)$ is the $m\times m$ signed permutation matrix $[q_{ij}]$  defined by 
$q_{ij}=1$ if $\theta(\sigma)(i)$ is positive and $q_{ij}=-1$ if $\theta(\sigma)(i)$ is negative.
 For example, if $\sigma\in {\mathcal S}_n^{\pi}$ is $(2,1,6,5,4,3,8,7)$, we have
\[P=\left[\begin{array}{c|c|c|c|c|c|c|c}
&1&&&&&&\\ \hline
1&&&&&&&\\ \hline
&&&&&1&&\\ \hline
&&&&1&&&\\ \hline
&&&1&&&&\\ \hline
&&1&&&&&\\ \hline
&&&&&&&1\\ \hline
&&&&&&1&\end{array}\right]\rightarrow
\theta(P)=\left[\begin{array}{r|r|r|r}
-1&&&\\ \hline
&-1&&\\ \hline
&&&1\\ \hline
&&1&\end{array}\right].\]
See e.g. section 3 of  \cite{BBS}.
\end{itemize}

The {\it term rank} $\rho(A)$ of an $n\times n$  $(0,1)$-matrix $A$ equals the maximum cardinality of a set of $1$'s of $A$ with no two of the $1$'s from the same row or column. By the K\H onig-Eg\`ervary theorem, $\rho(A)$ equals the minimum number $\beta(A)$ of rows and columns of $A$ that contain all the 1's of $A$. We define a set of $1$'s of $A$ to be {\it $\pi$-invariant} provided the set is invariant under a rotation by 180 degrees.  Similarly, we define a set of rows and columns of $A$ to be {\it $\pi$-invariant} provided the set is invariant under a rotation by 180 degrees. The {\it centrosymmetric term rank} $\rho_{\pi}(A)$ is  defined to be the maximum cardinality of a $\pi$-invariant set of $1$'s of $A$ with no two of the $1$'s from the same row or column. A {\it centrosymmetric cover} of $A$ is a $\pi$-invariant set of rows and columns that contain all the 1's of $A$. The minimum cardinality of a centrosymmetric cover of $A$ is denoted by $\beta_{\pi}(A)$.
In investigating $\rho_{\pi}(A)$ there is no loss in generality in assuming that $A$ is centrosymmetric.

First assume that $n=2m$ is even and $A$ is a $2m\times 2m$ centrosymmetric $(0,1)$-matrix. Thus $A$ has the form given by
\begin{equation}\label{eq:centroform}
A=\left[\begin{array}{c|c}
A_1&A_2\\ \hline
A_2^{\pi}&A_1^{\pi}\end{array}\right].\end{equation}
Let $\rho_{*}([A_1\ | \  A_2])$ equal the maximum cardinality of a set of $1$'s of $[A_1\ | \  A_2]$ with no two from the same row and no two from the union of columns $i$ and $(2m+1-i)$.

\begin{theorem}\label{th:centrotermrank}
Let $A$ be a  $2m\times 2m$ centrosymmetric $(0,1)$-matrix. Then there exists
a centrosymmetric permutation matrix $P\le A$ $($entrywise$)$ if and only
if there does not exist a centrosymmetric cover of $A$ of size $<2m$. More generally,  $\rho_{\pi}(A)= \beta_{\pi}(A)$.
\end{theorem}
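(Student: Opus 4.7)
The plan is to reduce the centrosymmetric equality to an ordinary bipartite matching problem on a smaller graph and then invoke the classical K\"onig--Egerv\'ary theorem. The easy inequality $\rho_\pi(A)\le\beta_\pi(A)$ holds for free, since a centrosymmetric cover is in particular a line cover of $A$, and no line contains two of the chosen nonattacking $1$'s; thus any $\pi$-invariant nonattacking set is at most as large as any centrosymmetric cover. For the reverse direction I would write $A$ in the block form (\ref{eq:centroform}) and work with the $m\times 2m$ top half $M=[A_1\ |\ A_2]$.

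The key observation is that since $n=2m$ is even, no position satisfies $(i,j)=(2m+1-i,2m+1-j)$, so the $180^\circ$ rotation acts without fixed points on the $1$'s of $A$. Consequently every $\pi$-invariant nonattacking set $S$ of $1$'s of $A$ pairs up its elements and is determined by its top half $S_1=S\cap(\text{rows }1,\ldots,m)$. The condition that no two $1$'s of $S$ share a row or column of $A$ translates precisely to: no two elements of $S_1$ lie in the same row of $M$, and no two lie in the union of columns $j$ and $2m+1-j$ of $M$. Hence $\rho_\pi(A)=2\rho_*(M)$. An analogous analysis for a centrosymmetric cover---specified by a selection of row-pairs $\{r,2m+1-r\}$ and column-pairs $\{c,2m+1-c\}$ that together meet every $1$ of $M$---gives $\beta_\pi(A)=2\beta_*(M)$, where $\beta_*(M)$ is the minimum number of such (row, column-pair) lines covering all $1$'s of $M$.

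Finally I would introduce the auxiliary bipartite multigraph $G$ with vertex classes $\{1,\ldots,m\}$ (rows of $M$) and $\{1,\ldots,m\}$ (column-pairs), placing one edge for each $1$ of $M$. In $G$, $\rho_*(M)$ is the matching number and $\beta_*(M)$ is the vertex-cover number, so the classical K\"onig--Egerv\'ary theorem yields $\rho_*(M)=\beta_*(M)$; doubling gives $\rho_\pi(A)=\beta_\pi(A)$. The first half of the theorem is the special case $\rho_\pi(A)=2m$, since a centrosymmetric permutation matrix $P\le A$ is exactly a $\pi$-invariant set of $2m$ nonattacking $1$'s. The main obstacle is purely bookkeeping: one must carefully verify that the centrosymmetric parameters on $A$ correspond under this reduction to the ordinary matching and cover parameters on the same graph $G$, with the correct factor of $2$. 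Once that translation is nailed down, no new combinatorial idea beyond K\"onig--Egerv\'ary is required.
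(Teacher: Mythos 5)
Your proposal is correct and takes essentially the same route as the paper: your auxiliary bipartite graph on rows and column-pairs of $[A_1\ |\ A_2]$ is precisely the graph of the paper's Boolean-sum matrix $B=A_1+_bA_2'$, and both arguments conclude by applying the K\H{o}nig--Egerv\'ary theorem there and doubling. Your write-up is if anything more careful about the translation of covers (the paper's line ``$\beta_{\pi}(A)=2\beta(A)$'' should read $2\beta(B)$).
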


\begin{proof} 
The matrix $A$ has the form given in $(\ref{eq:centroform})$.
Let $A_2'$ be the matrix obtained from $A_2$ by reversing the order of its columns.
Let $B$ be the $m\times m$ $(0,1)$-matrix $A_1+_b A_2'$
where $+_b$ denotes {\it Boolean sum} (so $0+_b0=0, 0+_b1=1,1+_b1=1$). Then $\rho(B)=\rho_{*}([A_1\ | \  A_2])$ and
$\rho_{\pi}(A)=2\rho(B)$. Moreover, it follows from our discussion above that 
 $\rho(B)=\beta(B)$. Since $A$ is centrosymmetric, we obtain that $\beta_{\pi}(A)=2\beta(A)$. Hence $\rho_{\pi}(A)=\beta_{\pi}(A)$.
\end{proof}

\begin{example}\label{ex:double}{\rm To illustrate the proof of Theorem \ref{th:centrotermrank}, consider the $8\times 8$ centrosymmetric matrix 
\[A=\left[\begin{array}{c|c}
A_1&A_2\\ \hline
A_2^{\pi}&A_1^{\pi}\end{array}\right]=\left[\begin{array}{cccc|cccc}
1&1&0&1&0&0&0&1\\
0&0&0&0&1&1&0&0\\
0&0&0&0&0&1&0&0\\
1&0&1&0&0&1&0&0\\ \hline
0&0&1&0&0&1&0&1\\
0&0&1&0&0&0&0&0\\
0&0&1&1&0&0&0&0\\
1&0&0&0&1&0&1&1\end{array}\right].\]
Then, using shading to denote the special 1's, we have that
\[B=A_1+_bA_2'=\left[\begin{array}{cccc}
1&\cellcolor[gray]{0.8}1&0& 1\\
0&0&  1&\cellcolor[gray]{0.8} 1\\
		0&0&\cellcolor[gray]{0.8} 1&0\\
	\cellcolor[gray]{0.8}1&1&1&0\end{array}\right]\rightarrow
[A_1\ A_2]=\left[\begin{array}{cccc|cccc}
	1&\cellcolor[gray]{0.8} 1&0&1&0&0&0&1\\
0&0&0&0&\cellcolor[gray]{0.8} 1&1&0&0\\
0&0&0&0&0&\cellcolor[gray]{0.8}  1&0&0\\
\cellcolor[gray]{0.8}1&0&1&0&0&1&0&0\end{array}\right],\]
resulting in  the centrosymmetric permutation matrix in $A$ given by
\[\left[\begin{array}{cccc|cccc}
1&\cellcolor[gray]{0.8} 1&0&1&0&0&0&1\\
0&0&0&0&\cellcolor[gray]{0.8}1&1&0&0\\
0&0&0&0&0&\cellcolor[gray]{0.8} 1&0&0\\
\cellcolor[gray]{0.8} 1&0&1&0&0&1&0&0\\ \hline
0&0&1&0&0&1&0&\cellcolor[gray]{0.8}1\\
0&0&\cellcolor[gray]{0.8} 1&0&0&0&0&0\\
0&0&1&\cellcolor[gray]{0.8} 1&0&0&0&0\\
1&0&0&0&1&0&\cellcolor[gray]{0.8} 1&1\end{array}\right].\]

}
\hfill{$\Box$}
\end{example}

Now assume that $n=2m+1$ and let $A$ be a $(2m+1)\times (2m+1)$ centrosymmetric $(0,1)$-matrix. Then $A$ has the form
\begin{equation}\label{eq:centro-odd}
A=\left[\begin{array}{c|c|c}
A_1&u&A_2\\ \hline
v&x&v^{\pi}\\ \hline
A_2^{\pi}&u^{\pi}&A_1^{\pi}\end{array}\right]\end{equation}
where $A_1$ and $A_2$ are $m\times m$ matrices, $u$ is $m\times 1$, $v$ is $1\times m$ and $x$ is $1\times 1$. A  $\pi$-invariant set of $1$'s of $A$ with no two on the same row or column cannot contain any 1's from $u$ or $v$.
Thus $\rho_{\pi} (A)=\rho_{\pi}(A')+x$ where $A'$ is the $2m\times 2m$ matrix obtained from $A$ by deleting row $(m+1)$ and column $(m+1)$.
In this odd case, we may have that
$\beta_{\pi}(A)>\rho_{\pi}(A)$. For instance, with the centrosymmetric matrix
\[A=\left[\begin{array}{ccc}
0&1&0\\ 1&0&1\\ 0&1&0\end{array}\right],\] we  have that
$\rho_{\pi}(A)=0$ but $\beta_{\pi}(A)=\beta(A)=2$. But we do have the following corollary.

\begin{corollary}\label{cor:centrotermrank}
Let $A$ be a  $(2m+1)\times (2m+1)$ centrosymmetric $(0,1)$-matrix as in $(\ref{eq:centro-odd})$.  Then there exists
a centrosymmetric permutation matrix $P\le A$ $($entrywise$)$ if and only $x=1$ and the matrix $A'$ obtained from $A$ by deleting row and column $m+1$ satisfies $ \beta_{\pi}(A')=2m$.
\end{corollary}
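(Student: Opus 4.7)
The plan is to reduce the odd-order corollary to the even-order Theorem \ref{th:centrotermrank} by peeling off the central row and column of $A$. The key structural observation, already noted in the paragraph preceding the corollary, is that a $\pi$-invariant set of $1$'s of $A$ with no two in a common row or column cannot use any entry of $u$ or $v$: a $1$ at position $(i,m+1)$ with $i\le m$ would be rotated to $(2m+2-i,m+1)$, producing a second $1$ in column $m+1$, and the symmetric argument kills row $m+1$ outside the center. Hence any centrosymmetric permutation matrix $P\le A$ meets row and column $m+1$ only at the central position $(m+1,m+1)$.

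For the forward direction I would argue that, assuming such a $P$ exists, the $1$ in the center of $P$ forces $x=1$, while deleting row and column $m+1$ from $P$ leaves a $2m\times 2m$ centrosymmetric permutation matrix $P'\le A'$. Theorem \ref{th:centrotermrank} applied to $A'$ then yields $\beta_{\pi}(A')=\rho_{\pi}(A')=2m$. For the converse, given $x=1$ and $\beta_{\pi}(A')=2m$, Theorem \ref{th:centrotermrank} supplies a $2m\times 2m$ centrosymmetric permutation matrix $P'\le A'$; reinserting the deleted row and column as all zeros except for a single $1$ at the central position produces a $(2m+1)\times(2m+1)$ centrosymmetric permutation matrix $P\le A$, as required.

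I do not expect any real obstacle here: the corollary is essentially a formal split of the odd case into a central entry plus an even-order residue, with all the combinatorial content carried by Theorem \ref{th:centrotermrank}. The only point demanding a moment of care is the exclusion of $u$ and $v$ from any $\pi$-invariant system of distinct representatives, and that has already been justified in the discussion immediately preceding the statement.
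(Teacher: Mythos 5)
Your proposal is correct and follows exactly the route the paper intends: the paper's own justification is the remark immediately preceding the corollary (a $\pi$-invariant system of nonattacking $1$'s cannot use $u$ or $v$, so $\rho_{\pi}(A)=\rho_{\pi}(A')+x$), combined with Theorem \ref{th:centrotermrank} applied to the even-order matrix $A'$. Nothing is missing.
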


We now turn   to the convex polytope   ${\Omega}_n^{\pi}$ of $n\times n$ centrosymmetric doubly stochastic matrices (a subpolytope of $\Omega_n$) and its set ${\mathcal E}(\Omega_n^{\pi})$ of extreme points which necessarily includes the $n\times n$ centrosymmetric permutation matrices. The polytope $\Omega_n^{\pi}$ was investigated in \cite{AC} and the results are summarized in the next theorem.
For the convenience of the reader we briefly outline the proof.

\begin{theorem}\label{th:centroext}
The extreme points of $\Omega_n^{\pi}$ are  of the form $\frac{1}{2}(P+P^{\pi})$ where $P$ is a permutation matrix. If $n$ is even and $P$ is not centrosymmetric, then the matrix  $\frac{1}{2}(P+P^{\pi})$ is not an extreme point of  $\Omega_n^{\pi}$; in fact,  there exist centrosymmetric 
permutation matrices $Q_1$ and $Q_2$ such that $P+P^{\pi}=Q_1+Q_2$.
Thus if $n$ is even, the set ${\mathcal E}_n(\Omega_n^{\pi})$  of extreme points of $\Omega_n^{\pi}$ is the set ${\mathcal P}_n^{\pi}$ of $n\times n$ centrosymmetric permutation matrices. If $n\ge 3$ is odd, then ${\mathcal P}_n^{\pi}$ is  proper subset of ${\mathcal E}(\Omega_n^{\pi})$.
\end{theorem}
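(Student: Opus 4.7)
The plan is to follow the same averaging strategy used in the proof of Lemma~\ref{lem:symext}. Given $A\in\Omega_n^{\pi}$, Birkhoff's theorem writes $A=\sum_i c_iP_i$ as a convex combination of permutation matrices; taking the $\pi$-transform and using $A^{\pi}=A$ gives $A=\sum_i c_i\cdot\tfrac{1}{2}(P_i+P_i^{\pi})$. Hence every extreme point of $\Omega_n^{\pi}$ is of the form $\tfrac{1}{2}(P+P^{\pi})$ for some permutation matrix $P$. If $P$ is already centrosymmetric then this matrix is $P\in\mathcal{P}_n^{\pi}$, so everything reduces to analyzing what happens when $P$ is not centrosymmetric.

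For even $n=2m$, the core step is to prove that a non-centrosymmetric $P$ forces $M:=P+P^{\pi}$ to split as $Q_1+Q_2$ with $Q_1,Q_2$ distinct centrosymmetric permutation matrices. I would regard $M$ as a $2$-regular bipartite multigraph $G$ on $2m$ row vertices and $2m$ column vertices; the map swapping $i\leftrightarrow n+1-i$ on both sides is a fixed-point-free automorphism of $G$ because $n$ is even. The key observation is that every $\pi$-orbit of a position is a pair $\{(a,b),(n+1-a,n+1-b)\}$ containing one entry in row $a$ and one in row $n+1-a$, and one entry in column $b$ and one in column $n+1-b$. Identifying $i$ with $n+1-i$ therefore yields a well-defined quotient bipartite multigraph $\bar G$ on $m+m$ vertices in which each row-pair meets exactly two $\pi$-orbits, so $\bar G$ is $2$-regular. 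By König's edge-coloring theorem for bipartite multigraphs, $\bar G$ decomposes as $\bar Q_1+\bar Q_2$ into two perfect matchings. Lifting each $\bar Q_i$ by restoring both positions of every orbit it selects yields a $\pi$-invariant $(0,1)$-matrix $Q_i$; the orbit observation guarantees that each row and each column of $Q_i$ contains exactly one $1$, so $Q_i$ is a centrosymmetric permutation matrix, and $Q_1+Q_2=M$ by construction. Finally, $P\ne P^{\pi}$ forces $Q_1\ne Q_2$, so $\tfrac{1}{2}(P+P^{\pi})=\tfrac{1}{2}Q_1+\tfrac{1}{2}Q_2$ is a proper convex combination of two distinct elements of $\Omega_n^{\pi}$ and is not extreme. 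Combined with the first paragraph, this gives $\mathcal{E}(\Omega_n^{\pi})=\mathcal{P}_n^{\pi}$ for even $n$.

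For odd $n\ge 3$, it suffices to exhibit a single extreme point of $\Omega_n^{\pi}$ that is not a centrosymmetric permutation matrix. Writing $n=2k+1$, I would take $A$ equal to the matrix $C_3$ of~(\ref{eq:C_k}) on the central $3\times 3$ block (rows and columns $k,k+1,k+2$) and equal to the identity on the remaining diagonal entries; $A$ is manifestly centrosymmetric and doubly stochastic. In any convex decomposition of $A$ inside $\Omega_n^{\pi}$, the forced $1$'s on the outer diagonal must be preserved in each summand, so the extremality of $A$ reduces to the extremality of $C_3$ inside $\Omega_3^{\pi}$. A direct computation shows that $\Omega_3^{\pi}$ is a two-dimensional polytope whose vertex set contains $I_3$, $L_3$, and $C_3$, and $C_3$ is not a permutation matrix; this establishes $\mathcal{P}_n^{\pi}\subsetneq\mathcal{E}(\Omega_n^{\pi})$.

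The main obstacle is the quotient/lift step in the even case, specifically the verification that for even $n$ each $\pi$-orbit of a position picks up exactly one entry from each row of its row-pair and one from each column of its column-pair. This is precisely what upgrades the lifted matchings to honest permutation matrices, and its failure through the central row and column in the odd case is exactly what permits new, non-permutation extreme points to appear there.
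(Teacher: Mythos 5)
Your proof is correct and follows essentially the same route as the paper: the averaging step is identical, your bipartite-multigraph quotient over the pairs $\{i,n+1-i\}$ is exactly the paper's replacement of each $2\times 2$ block $(P+P^{\pi})[\{i,n+1-i\};\{j,n+1-j\}]$ by a single entry of an $\frac{n}{2}\times\frac{n}{2}$ matrix with line sums $2$, and your odd-case witness is the matrix $E_1=C_3$ of Example~\ref{ex:cent} padded by an identity block. Your handling of multiplicities (double edges) in the quotient is in fact slightly more careful than the paper's, but the idea is the same.
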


\begin{proof} Let $A\in \Omega_n^{\pi}$. There  are permutation matrices $P_1,P_2,\ldots,P_m$ and positive numbers $c_1,c_2,\ldots,c_m$ with $\sum_{i=1}^mc_i=1$ such that
\[A=\sum_{i=1}^mc_iP_i\mbox{ and }A^{\pi}=\sum_{i=1}^mc_iP_i^{\pi}.\]
Since $A$ is centrosymmetric, 
\[A=\sum_{i=1}^m c_i\left(\frac{1}{2}(P_i+P_i^{\pi})\right)\]from which the first assertion follows.

Now assume that $n$ is even, and consider $P+P^{\pi}$ where $P$ is an $n\times n$ permutation matrix and $P\ne P^{\pi}$. Then $P+P^{\pi}$ is a centrosymmetric $(0,1,2)$-matrix with all row and column sums equal to 2. 
By  centrosymmetry, each $2\times 2$ submatrix of the form 
$(P+P^{\pi})[\{i,n+1-i\};\{j,n+1-j\}]$ equals one of
\begin{equation}\label{eq:reverse}
O_2=\left[\begin{array}{cc} 0&0 \\ 0&0\end{array}\right],
I_2=\left[\begin{array}{cc} 1&0 \\ 0&1\end{array}\right],
L_2=\left[\begin{array}{cc} 0&1 \\ 1&0\end{array}\right].\end{equation}
We construct an $\frac{n}{2}\times \frac{n}{2}$ $(0,1)$-matrix $X$ by replacing each such $2\times 2$ submatrix with a single 0 or 1 as follows:
\begin{equation}\label{eq:back}
O_2\rightarrow 0, I_2\rightarrow 1, L_2\rightarrow 1.\end{equation}
 Each row and column sum of $X$ equals 2, and hence $X$ is the sum of two permutation matrices $R_1$ and $R_2$. Reversing the arrows in (\ref{eq:back}) (in case of a 1 we have two choices) we can obtain two centrosymmetric permutation matrices $Q_1$ and $Q_2$ such that $P+P^{\pi}=Q_1+Q_2$.  Since every centrosymmetric matrix is an extreme point of $\Omega_n^{\pi}$,  we now conclude that ${\mathcal E}_n(\Omega_n^{\pi})={\mathcal P}_n^{\pi}$ if $n$ is even. For $n$ odd, see the next example.
\end{proof}

\begin{example}\label{ex:cent}{\rm Let $n=3$.
We claim that the centrosymmetric doubly stochastic matrix
\[A=\frac{1}{4}\left[\begin{array}{ccc}
3&1&0\\
1&2&1\\
0&1&3\end{array}\right]\]
is not in the convex hull of ${\mathcal P}_3^{\pi}$. There are only two $3\times 3$  centrosymmetric permutation matrices, namely the identity matrix $I_3$ and the Hankel-identity matrix $L_3$. The only other extreme points of  $\Omega_3^{\pi}$ are
\[E_1=\frac{1}{2}\left[\begin{array}{ccc}
0&1&1\\
1&0&1\\
1&1&0\end{array}\right]\mbox{ and }
E_2=\frac{1}{2}\left[\begin{array}{ccc}
1&1&0\\
1&0&1\\
0&1&1\end{array}\right].\]
In fact, we have
$A=\frac{1}{2}(E_2+I_3)$.
\hfill{$\Box$}
}
\end{example}

The follow corollary is also in \cite{AC}.

\begin{corollary}\label{cor:centroext} A $2m\times 2m$ centrosymmetric nonnegative integral matrix $A$ with all row and column sums equal to $k$ can be expressed as a sum of $k$ centrosymmetric permutation matrices.
\end{corollary}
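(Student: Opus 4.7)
My plan is to proceed by induction on $k$. The base case $k=0$ is immediate since then $A$ is the zero matrix, an empty sum of centrosymmetric permutation matrices. For the inductive step with $k\ge 1$, it suffices to produce a single centrosymmetric permutation matrix $P$ with $P\le A$ entrywise; then $A-P$ is a $2m\times 2m$ centrosymmetric nonnegative integral matrix with all row and column sums equal to $k-1$, and the induction hypothesis finishes the job.

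To produce $P$, let $A^*$ be the support $(0,1)$-matrix of $A$, which is centrosymmetric because $A$ is. By Theorem \ref{th:centrotermrank} applied to $A^*$, a centrosymmetric permutation matrix $P\le A^*$ exists as soon as $\beta_{\pi}(A^*)=2m$ (equivalently, there is no centrosymmetric cover of size less than $2m$); and since entries of $A$ at positions of $A^*$'s 1's are positive integers, $P\le A^*$ implies $P\le A$ entrywise.

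The key step, therefore, is the lower bound $\beta_{\pi}(A^*)\ge 2m$. The upper bound $\beta_{\pi}(A^*)\le 2m$ is trivial since the full row set is $\pi$-invariant. For the lower bound, let $R$ and $C$ be rows and columns forming a centrosymmetric cover of $A^*$, so that every nonzero entry of $A$ lies in a row of $R$ or a column of $C$. Then
\[
2mk \;=\; \sum_{i,j} a_{ij} \;\le\; \sum_{i\in R}\sum_{j} a_{ij} \,+\, \sum_{j\in C}\sum_{i} a_{ij} \;=\; |R|\,k \,+\, |C|\,k,
\]
so $|R|+|C|\ge 2m$ because $k\ge 1$. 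Hence $\beta_{\pi}(A^*)=2m$, and Theorem \ref{th:centrotermrank} supplies the desired $P$.

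The main obstacle is really just recognizing that the inductive step reduces cleanly to the existence problem solved by Theorem \ref{th:centrotermrank}; once that reduction is in hand, the counting argument for $\beta_{\pi}(A^*)\ge 2m$ is a direct double count and the rest is formal. One small subtlety to note is that this corollary is stated only for the even case $n=2m$, which is precisely where Theorem \ref{th:centroext} shows that the extreme points of $\Omega_n^{\pi}$ are centrosymmetric permutation matrices; the odd case would require handling the central row and column separately, as in the discussion preceding Corollary \ref{cor:centrotermrank}.
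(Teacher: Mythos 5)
Your proof is correct, but it reaches the key existence statement (a centrosymmetric permutation matrix $P\le A$) by a genuinely different route than the paper. The paper applies Theorem \ref{th:centroext} to the doubly stochastic matrix $\frac{1}{k}A$: since for even $n$ the extreme points of $\Omega_n^{\pi}$ are exactly the centrosymmetric permutation matrices, $\frac{1}{k}A$ is a convex combination of such matrices, and any one appearing with positive coefficient has its support inside that of $A$; induction on $A-P$ then finishes, exactly as in your reduction. You instead invoke the K\H{o}nig--Eg\`ervary-type duality of Theorem \ref{th:centrotermrank}, verifying the hypothesis $\beta_{\pi}(A^*)\ge 2m$ by the standard double count $2mk\le(|R|+|C|)k$ for any cover of the support. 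This is the classical dichotomy between the Birkhoff--von Neumann route and the K\H{o}nig/Hall route to decomposing regular nonnegative integral matrices, transplanted to the centrosymmetric setting; your version is more self-contained at the combinatorial level (it never touches the polytope $\Omega_n^{\pi}$ or convexity), while the paper's is shorter given that Theorem \ref{th:centroext} is already in hand. Your counting step in fact proves the stronger statement that \emph{every} cover, centrosymmetric or not, has size at least $2m$, which is harmless; the centrosymmetric min--max equality of Theorem \ref{th:centrotermrank} is still essential and you place it correctly. Your closing remark about the odd case is also apt, since for odd $n$ both the extreme-point description and the term-rank duality break down.
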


\begin{proof}
By Theorem \ref{th:centroext} the doubly stochastic matrix $(1/k)(A)$ can be written as a convex combination of centrosymmetric permutation matrices. This implies that there exists a centrosymmetric permutation matrix $P$ whose 1's correspond to nonzero positions in $A$. Now use induction on $A-P$.\end{proof}

In \cite{CN} there is also an investigation of the convex polytope $\Omega_n^{\pm}$, defined to be the convex hull of the $n\times n$ signed permutation matrices (so $n$ is even). Under somewhat more general considerations, it is proved that $\dim \Omega_n^{\pm}=n^2$, and that the set ${\mathcal E}(\Omega_n^{\pm})$ of extreme points of $\Omega_n^{\pm}$ is precisely the set ${\mathcal P}_n^{\pm}$ of $n\times n$ signed permutation matrices.
This second assertion  is equivalent to the assertion in Theorem \ref{th:centroext} that when $n$ is even, the set of extreme points of $\Omega_n^{\pi}$ is the set of $n\times n$ centrosymmetric permutation matrices

We now turn to the dimension of the polytope $\Omega_n^{\pi}$.

\begin{example}\label{ex:dim4} {\rm Let $n=4$. 
It is straightforward to check that there are exactly eight centrosymmetric $4\times 4$ permutation matrices and that the following six matrices $A_1,A_2,A_3,A_4,A_5,A_6$ are linearly independent:
\[A_1=
\left[\begin{array}{c|c|c|c}
1&&&\\ \hline &1&&\\ \hline &&1&\\ \hline &&&1\\ \end{array}\right],
A_2=\left[\begin{array}{c|c|c|c}
&1&&\\ \hline 1&&&\\ \hline &&&1\\ \hline &&1&\\ \end{array}\right],
A_3=\left[\begin{array}{c|c|c|c}
&&&1\\ \hline &1&&\\ \hline &&1&\\ \hline 1&&&\\ \end{array}\right],\]
\[A_4=\left[\begin{array}{c|c|c|c}
1&&&\\ \hline &&1&\\ \hline &1&&\\ \hline &&&1\\ \end{array}\right],
A_5=\left[\begin{array}{c|c|c|c}
&&1&\\ \hline 1&&&\\ \hline &&&1\\ \hline &1&&\\ \end{array}\right],
A_6=\left[\begin{array}{c|c|c|c}
&1&&\\ \hline &&&1\\ \hline 1&&&\\ \hline &&1&\\ \end{array}\right].\]
The other two centrosymmetric $4\times 4$ permutation matrices, namely,
\[A_7=\left[\begin{array}{c|c|c|c}
&&&1\\ \hline &&1&\\ \hline &1&&\\ \hline 1&&&\\ \end{array}\right],
A_8=\left[\begin{array}{c|c|c|c}
&&1&\\ \hline &&&1\\ \hline1 &&&\\ \hline &1&&\\ \end{array}\right]\]
are linearly combinations of $A_1,A_2,A_3,A_4,A_5,A_6$. It can  be checked that $A_1,A_2,A_3,A_5,A_7,A_8$ are also linearly independent.
We conclude that the affine dimension of $\Omega_4^{\pi}$ is 6 and thus $\dim \Omega_4^{\pi}=5$.\hfill{$\Box$}
}\end{example}

\begin{theorem}\label{th:csdimeven}
Let $n$ be an even integer. Then
\[\dim \Omega_n^{\pi}=\frac{(n-1)^2+1}{2}.\]
\end{theorem}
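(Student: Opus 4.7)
The plan is to pin down $\dim \Omega_n^{\pi}$ by a direct parameter count on the affine hull, then to certify the lower bound by exhibiting an interior point.

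Write $n = 2m$ and parameterize any centrosymmetric matrix $A = [a_{ij}]$ by its top half, that is, by the entries $a_{ij}$ with $1 \le i \le m$ and $1 \le j \le n$; the remaining entries are forced by $a_{n+1-i,n+1-j}=a_{ij}$. Since $n$ is even, no entry is its own image under the $180^{\circ}$ rotation, so the $n^2$ entries partition into $2m^2$ orbits, giving exactly $2m^2$ free real parameters. I would then express the doubly stochastic conditions in these parameters and show that, of the natural $2n$ row/column sum equations, centrosymmetry already identifies the constraint for row $i$ with that for row $n+1-i$ (and similarly for columns), leaving the $m$ row equations for $i=1,\dots,m$ and the $m$ column equations for $j=1,\dots,m$.

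Next I would verify that among these $2m$ linear equations there is exactly one dependency. Summing the $m$ row equations over $i=1,\dots,m$ gives $\sum_{i=1}^{m}\sum_{j=1}^{n}a_{ij}=m$, and summing the $m$ column equations over $j=1,\dots,m$ gives
\[
\sum_{j=1}^{m}\Bigl(\sum_{i=1}^{m} a_{ij}+\sum_{i=1}^{m} a_{i,n+1-j}\Bigr)=\sum_{i=1}^{m}\sum_{j=1}^{n}a_{ij}=m,
\]
so these two aggregate equations coincide. To see that this is the only dependency, I would use the standard coefficient-tracking argument (as for $\Omega_n$): if $\sum_i \alpha_i r_i + \sum_j \beta_j c_j = 0$ as a linear form in the free variables, then examining the coefficient of $a_{ij}$ (for $1\le i\le m$ and $1\le j\le m$ it appears only in $r_i$ and $c_j$; for $1\le i\le m$ and $m+1\le j\le n$ it appears in $r_i$ and in $c_{n+1-j}$) forces all $\alpha_i$ equal and all $\beta_j$ equal with $\alpha_i + \beta_j = 0$. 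Hence the rank of the constraints is $2m-1$, so the affine hull has dimension $2m^2 - (2m-1) = 2m^2 - 2m + 1 = \tfrac{(n-1)^2+1}{2}$.

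Finally, to promote this to the dimension of $\Omega_n^{\pi}$ itself, I would observe that $\frac{1}{n}J_n$ is a centrosymmetric doubly stochastic matrix all of whose entries are strictly positive. Any sufficiently small perturbation in the affine hull of $\Omega_n^{\pi}$ preserves nonnegativity, so $\frac{1}{n}J_n$ lies in the relative interior of $\Omega_n^{\pi}$; therefore $\dim \Omega_n^{\pi}$ equals the dimension of the affine hull, namely $\tfrac{(n-1)^2+1}{2}$. The main obstacle is the bookkeeping in step three (confirming that there is exactly one dependency among the $2m$ row/column equations); everything else is either a straightforward count or a standard interior-point observation.
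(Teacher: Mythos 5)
Your proof is correct, but it takes a genuinely different route from the paper's. You work directly with the linear system: parameterize a centrosymmetric matrix by its $2m^2$ top-half entries, observe that centrosymmetry collapses the $2n$ row/column-sum constraints to $2m$ of them, and then run the standard coefficient-tracking argument to show those $2m$ constraints have exactly one dependency, giving affine-hull dimension $2m^2-(2m-1)=\frac{(n-1)^2+1}{2}$; the strictly positive point $\frac{1}{n}J_n$ then certifies that $\Omega_n^{\pi}$ is full-dimensional in that affine hull. The paper instead gets the upper bound by exhibiting a determining set of entries (all of $A_1$ plus the leading $(\frac{n}{2}-1)\times(\frac{n}{2}-1)$ corner of $A_2$), and gets the lower bound by mapping $A$ to $B=\frac{1}{2}(A_1+A_2^*)\in\Omega_{n/2}$ and counting the freedom in choosing $A_1$ over an interior $B$, thereby leaning on the known fact $\dim\Omega_{n/2}=(\frac{n}{2}-1)^2$. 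Your version is self-contained and makes the "exactly one dependency" phenomenon explicit (the same mechanism behind $\dim\Omega_n=(n-1)^2$), at the cost of some bookkeeping; the paper's version is shorter because it reuses the dimension of the smaller Birkhoff polytope. Both arguments are sound, and your rank computation checks out: the coefficient of $a_{ij}$ is $\alpha_i+\beta_j$ for $j\le m$ and $\alpha_i+\beta_{n+1-j}$ for $j>m$, which indeed forces all $\alpha_i$ equal and all $\beta_j$ equal to their common negative.
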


\begin{proof} Let 
\[A=\left[\begin{array}{cc}
A_1&A_2\\
A_2^{\pi}&A_1^{\pi}\end{array}\right]\]
be an $n\times n$ centrosymmetric, doubly stochastic
matrix $A$ where $A_1$ is $\frac{n}{2}\times \frac{n}{2}$. Since $A$ is doubly stochastic, all of the entries of $A$ are determined once one knows the $(\frac{n}{2})^2$ entries in $A_1$ and the $(\frac{n}{2}-1)^2$ entries in the leading $(\frac{n}{2}-1)\times (\frac{n}{2}-1)$ submatrix of $A_2$.
(More generally, if $T$ is any spanning tree of the complete bipartite graph $K_{\frac{n}{2},\frac{n}{2}}$, then all of the entries of $A$ are determined once the $(\frac{n}{2})^2$ entries in $A_1$ and those entries of $A_2$ not corresponding to the edges of $T$ are specified.) Thus the dimension of $\Omega_n^{\pi}$ does not exceed that given in the theorem. 

Let $A_2^*$ be obtained from $A_2$ by reversing the order   of its columns.
Then the  matrix $\frac{1}{2}(A_1+A_2^*)$ is a matrix $B$ in $\Omega_{\frac{n}{2}}$ and every such $B\in \Omega_{\frac{n}{2}}$  can be obtained in this way. Since $\dim \Omega_{\frac{n}{2}}=(\frac{n}{2}-1)^2$, there is a small ball of dimension $(\frac{n}{2}-1)^2$ in the interior of  $ \Omega_{\frac{n}{2}}$.  Choosing any  $B$ in this ball, $A_1$ and $A_2$ can be arbitrarily chosen subject to  $A_1$ and $A_2$ having nonnegative entries and $\frac{1}{2}(A_1+A_2^*)=B$. We can choose the $\left(\frac{n}{2}\right)^2$ entries of $A_1$ independently provided they are small enough and then define $A_2^*$ by $A_2^*=B-A_1$.
Thus
\begin{eqnarray*}
\dim \Omega_n^{\pi}&\ge &\dim \Omega_{\frac{n}{2}} +\left(\frac{n}{2}\right)^2\\
&\ge &\left(\frac{n}{2}-1\right)^2+\left(\frac{n}{2}\right)^2\\
&=&
\frac{(n-1)^2+1}{2}\end{eqnarray*}
and hence we have equality.
\end{proof}

Let ${\mathcal C}_n=\Re ({\mathcal P}_n)$ be the real linear space spanned by the $n\times n$ centrosymmetric permutation matrices.
It follows from Theorems \ref{th:centroext} and \ref{th:csdimeven} that for $n$ even, $\dim {\mathcal C}_n=\frac{(n-1)^2+1}{2}+1$ and hence there
exists a set of $\frac{(n-1)^2+1}{2}+1$ linearly independent $n\times n$ centrosymmetric permutation matrices which thus form a basis of ${\mathcal C}_n$.  We now obtain  a basis of ${\mathcal C}_n$. 
 
 Let ${\mathcal L}_m={\Re}({\mathcal P}_m) $ be the real  {\it permutation linear space}  spanned by the $m\times m$ permutation matrices.  Then  $\dim {\mathcal L}_m =(m-1)^2+1$, and there are several known bases of ${\mathcal L}_m$
 (see e.g. \cite{BMe}). 
 
\begin{theorem}\label{th:csperbasis}
Let $n=2m$ be an even integer. Let ${\mathcal B}_m$ be any basis 
of ${\mathcal L}_m$. Then a basis of the linear space ${\mathcal C}_n=\Re ({\mathcal P}_n)$ is obtained as follows:
\begin{itemize}
\item[\rm (i)] The $(m-1)^2+1$ centrosymmetric permutation matrices of the form 
\[\left[\begin{array}{c|c}
P&O_m\\ \hline
O_m&P_m^{\pi}\end{array}\right]\quad (P\in {\mathcal B}_m),\]
\item[\rm (ii)] The  $m^2$ centrosymmetric permutation matrices of the form
\[\left[\begin{array}{c|c}
Q_{i,j}&P_{i,j}\\ \hline
P_{i,j}^{\pi}&Q_{i,j}^{\pi}\end{array}\right]\quad (1\le i,j\le m),\]
where $P_{i,j}$ is the $m\times m$ $(0,1)$-matrix with exactly one $1$
 and this $1$ is in its position $(i,j)$, and $Q_{i,j}$ is any  $m\times m$ $(0,1)$-matrix with all $0$'s in row $i$ and column $m+1-j$ such that deleting its row $i$ and column $m+1-j$ results in a permutation matrix.
 \end{itemize}
 \end{theorem}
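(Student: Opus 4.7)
The plan is to confirm that the $(m-1)^2+1+m^2 = 2m^2-2m+2$ matrices listed in (i) and (ii) all belong to $\mathcal{C}_n$ and are linearly independent. Since Theorems \ref{th:centroext} and \ref{th:csdimeven} give $\dim\mathcal{C}_n = \frac{(n-1)^2+1}{2}+1 = 2m^2-2m+2$, the count already matches, and linear independence will suffice to conclude. Throughout, I take $\mathcal{B}_m$ to consist of $m\times m$ permutation matrices (such a basis of $\mathcal{L}_m$ always exists, since $\mathcal{L}_m = \Re(\mathcal{P}_m)$ is by definition the span of permutation matrices, so any basis can be chosen inside that spanning set).

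First I would verify that the two families consist of centrosymmetric permutation matrices. For (i), the block-diagonal matrix $\begin{pmatrix} P & O_m \\ O_m & P^\pi \end{pmatrix}$ with $P\in\mathcal{P}_m$ is visibly a permutation matrix, and applying the identity $\begin{pmatrix} A & B \\ C & D \end{pmatrix}^\pi = \begin{pmatrix} D^\pi & C^\pi \\ B^\pi & A^\pi \end{pmatrix}$ together with $(P^\pi)^\pi=P$ gives centrosymmetry. For (ii), the hypotheses on $Q_{ij}$, namely that its row $i$ and its column $m+1-j$ are identically zero with the remaining $(m-1)\times(m-1)$ minor a permutation matrix, mesh exactly with the single $1$ of $P_{ij}$ at position $(i,j)$ and that of $P_{ij}^\pi$ at position $(m+1-i,m+1-j)$: each of the $2m$ rows and columns of the full matrix contains exactly one $1$, supplied by $Q_{ij}$, $Q_{ij}^\pi$, $P_{ij}$, or $P_{ij}^\pi$, with no overlap. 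Centrosymmetry of the block form is then immediate.

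For linear independence, the key observation is that the upper-right $m\times m$ block is the zero matrix for every matrix of type (i), while for a matrix of type (ii) it is precisely the standard matrix unit $P_{ij}$. Suppose
\[
\sum_{P\in\mathcal{B}_m} a_P \begin{pmatrix} P & O_m \\ O_m & P^\pi \end{pmatrix} + \sum_{i,j=1}^m b_{ij}\begin{pmatrix} Q_{ij} & P_{ij} \\ P_{ij}^\pi & Q_{ij}^\pi \end{pmatrix} = O_n.
\]
Reading off the upper-right block yields $\sum_{i,j} b_{ij} P_{ij} = O_m$, and since the $P_{ij}$ form the standard basis of the space of $m\times m$ matrices, each $b_{ij}=0$. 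The relation then collapses to $\sum_{P\in\mathcal{B}_m} a_P P = O_m$ in the upper-left block, and linear independence of $\mathcal{B}_m$ inside $\mathcal{L}_m$ forces all $a_P = 0$.

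The main obstacle, though really only bookkeeping, is the verification step for (ii): one must check carefully that the prescribed zero row and zero column of $Q_{ij}$ are filled precisely by the lone $1$ of $P_{ij}$ (in the upper-right block) and of $P_{ij}^\pi$ (in the lower-left block), so that the assembled $2m\times 2m$ matrix really is a permutation matrix. Once this is in place, the rest of the argument is essentially automatic, because the two families are decoupled through disjoint block coordinates: the upper-right block detects (ii) while the upper-left block, after those coefficients are killed, detects (i).
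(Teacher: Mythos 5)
Your proof is correct and follows essentially the same route as the paper: the count $(m-1)^2+1+m^2=2m^2-2m+2$ matches $\dim{\mathcal C}_n$ from Theorems~\ref{th:centroext} and~\ref{th:csdimeven}, so linear independence suffices. The paper simply asserts that independence is ``clear,'' whereas your block argument --- the upper-right blocks of the type (ii) matrices are the matrix units $P_{i,j}$, forcing those coefficients to vanish, after which the basis property of ${\mathcal B}_m$ in the upper-left block handles type (i) --- supplies that detail correctly.
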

 
 \begin{proof} The number of matrices specified in the theorem equals
 $(m-1)^2+1+m^2=2m^2-2m+2$ and this equals $\frac{(n-1)^2+1}{2}+1$
 for  $n=2m$. The set of matrices specified in (i) and (ii)  clearly constitute a set of linearly independent centrosymmetric 
permutation  matrices of cardinality $2m^2-2m+1=\dim {\mathcal C}_n$
 \end{proof}

\begin{example}{\rm \label{ex:n=4}
Let $n=4$. Then $\dim {\mathcal C}=6$, and $\dim {\mathcal L}_2=2$ where $\{I_2,L_2\}$ is a basis of ${\mathcal L}_2$. By Theorem \ref{th:csperbasis}, the following matrices form a basis of ${\mathcal C}_4$:
\[\left[\begin{array}{c|c||c|c}
1&&&\\ \hline
&1&&\\ \hline\hline
&&1&\\ \hline
&&&1\end{array}\right],
\left[\begin{array}{c|c||c|c}
&1&&\\ \hline
1&&&\\ \hline\hline
&&&1\\ \hline
&&1&\end{array}\right],\]
\[\left[\begin{array}{c|c||c|c}
&&1&\\ \hline
1&&&\\ \hline\hline
&&&1\\ \hline
&1&&\end{array}\right],
\left[\begin{array}{c|c||c|c}
&&&1\\ \hline
&1&&\\ \hline\hline
&&1&\\ \hline
1&&&\end{array}\right],
\left[\begin{array}{c|c||c|c}
1&&&\\ \hline
&&1&\\ \hline\hline
&1&&\\ \hline
&&&1\end{array}\right],
\left[\begin{array}{c|c||c|c}
&1&&\\ \hline
&&&1\\ \hline\hline
1&&&\\ \hline
&&1&\end{array}\right].\]
} \hfill{$\Box$}

\end{example}

We now consider $\dim \Omega_n^{\pi}$ when $n$ is odd.

\begin{theorem}\label{th:csdimodd}
Let $n$ be an odd  integer. Then
\[\dim \Omega_n^{\pi}=\frac{(n-1)^2}{2}.\]
\end{theorem}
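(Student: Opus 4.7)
The plan is to mirror the even-case proof of Theorem~\ref{th:csdimeven}, extracting the dimension by counting free parameters in a centrosymmetric matrix and subtracting the number of independent doubly stochastic constraints. Writing $n=2m+1$ and using the block form~(\ref{eq:centro-odd}), a centrosymmetric $n \times n$ matrix is freely determined by the $mn$ entries of its upper $m$ rows together with the $m+1$ entries $a_{m+1,1},\ldots,a_{m+1,m+1}$ that make up the left half of its internally symmetric middle row. So the ambient affine space of centrosymmetric matrices has dimension $2m^2+2m+1$.

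I would then pin down the independent doubly stochastic constraints. Centrosymmetry makes the row sums for rows $m+2,\ldots,n$ automatic once those of rows $1,\ldots,m+1$ are specified, and likewise for columns. In addition, if rows $1,\ldots,m+1$ all sum to $1$ and columns $1,\ldots,m$ all sum to $1$, then the total sum $n$ forces column $m+1$ to also sum to $1$. This leaves $m+1$ row and $m$ column equations, a total of $2m+1$ candidate constraints; the main step is to verify that the corresponding linear functionals $R_1,\ldots,R_{m+1},C_1,\ldots,C_m$ are linearly independent on the space of centrosymmetric matrices. Testing a putative relation $\sum_i \alpha_i R_i + \sum_j \beta_j C_j \equiv 0$ against three types of free parameter does the job: the upper-half middle-column entries $a_{i,m+1}=u_i$ force $\alpha_i=0$ for $i\le m$ (this is where excluding the redundant $C_{m+1}$ matters, since it is the only other constraint containing these entries); the center $a_{m+1,m+1}=x$ forces $\alpha_{m+1}=0$; and $a_{m+1,j}$ for $j\le m$, which appears in $R_{m+1}$ with coefficient $2$ (its own centrosymmetric image lies in the same middle row) and in $C_j$ with coefficient $1$, then forces $\beta_j=0$. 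This coefficient bookkeeping, in which the multiplicities from centrosymmetry and the interaction with the removed $C_{m+1}$ have to be tracked correctly, is the step I expect to require the most care.

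With independence established, the upper bound $\dim\Omega_n^{\pi}\le 2m^2+2m+1-(2m+1)=(n-1)^2/2$ follows. For the matching lower bound, the uniform matrix $\frac{1}{n}J_n$ is centrosymmetric doubly stochastic with strictly positive entries, hence lies in the relative interior of the affine span just described; any sufficiently small perturbation within that affine span remains nonnegative and thus in $\Omega_n^{\pi}$, yielding $\dim\Omega_n^{\pi}=(n-1)^2/2$.
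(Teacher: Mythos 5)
Your argument is correct, and it is essentially the dual of the paper's dimension count. For the upper bound the paper observes that in the block form (\ref{eq:centro-odd}) the border $u$, $v$, $x$ is uniquely determined by $A_1$ and $A_2$, so $\Omega_n^{\pi}$ injects into the $2m^2$-dimensional space of pairs $(A_1,A_2)$; you instead compute the codimension of the doubly stochastic constraints inside the $(2m^2+2m+1)$-dimensional space of centrosymmetric matrices, which requires the extra --- but correctly executed --- verification that the $2m+1$ surviving row and column functionals are linearly independent. Your bookkeeping there is sound: the middle-column entries $a_{i,m+1}$ kill the $\alpha_i$ precisely because $C_{m+1}$ has been discarded, the center kills $\alpha_{m+1}$, and the coefficient $2$ that $a_{m+1,j}$ contributes to $R_{m+1}$ (from its centrosymmetric partner in the same row) is harmless once $\alpha_{m+1}=0$, so the $\beta_j$ vanish. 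For the lower bound the paper explicitly builds a $2m^2$-parameter family by choosing $A_1$ and $A_2$ subject to the inequalities (ii)--(iv) of its proof and then filling in $u$, $v$, $x$; your observation that $\frac{1}{n}J_n$ is a strictly positive point of the affine hull, so that a full-dimensional neighborhood of it within that affine subspace stays in $\Omega_n^{\pi}$, is cleaner and avoids those inequalities. Both routes yield $\dim\Omega_n^{\pi}=2m^2=\frac{(n-1)^2}{2}$.
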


\begin{proof} Let $n=2m+1$ so that the assertion in the theorem is that $\dim \Omega_n^{\pi}=2m^2$.  Let
\begin{equation}\label{eq:small}A=\left[\begin{array}{c|c|c}
A_1&u&A_2\\ \hline
v&x&v^{\pi}\\ \hline
A_2^{\pi}&u^{\pi}&A_1^{\pi}\end{array}\right]\end{equation}
be an $n\times n$ centrosymmetric, doubly stochastic
matrix $A$ where $A_1$ and $A_2$ are $m\times m$ matrices, and
$u$ is $m\times 1$,  $v$ is  $1\times m$, and $x$ is $1\times 1$.
Since $A$ is doubly stochastic, the entries of $u$, $v$, and  $x$ are uniquely determined 
by the entries of $A_1$ and $A_2$. Thus the dimension of $\Omega_n^{\pi}$ is at most $2m^2$. We now show the reverse inequality holds.

We can choose  $m^2$ nonnegative entries for $A_1$ and $m^2$ nonnegative  entries for $A_2$ independently, but small enough, so that their row sum vectors $R_1=(r_1,r_2,\ldots,r_m)$
 and $R_2=(r_1',r_2',\ldots,r_m')$ and
 column sum vector $S_1=(s_1,s_2,\ldots,s_m)$, and  $S_2=(s_1',s_2',\ldots,s_m')$ satisfy
 \begin{itemize}
  \item[\rm (i)] $\sum_{i=1}^mr_i=\sum_{i=1}^m s_i$ and $\sum_{i=1}^mr_i'=\sum_{i=1}^m s_i'$,
 \item[\rm (ii)] $r_i+r_i'\le 1$ and $s_i+s_i'\le 1$, $(1\le i\le m)$,
 \item[\rm (iii)]  $\sum_{i=1}^m(r_i+r_i')\ge m-\frac{1}{2}$,
 \item[\rm (iv)] $ \sum_{i=1}^m(s_i+s_i')\ge m-\frac{1}{2}$.
 \end{itemize}
 Then defining $u_i=1-(r_i+r_i')$, $v_i=1-(s_i+s_i')$, $(1\le i\le m)$, and
 $x=1-2\sum_{i=1}^mu_i$, we  obtain a matrix  (\ref{eq:small}) in
 $\Omega_n^{\pi}$.
 We now conclude that
\[\dim \Omega_n^{\pi}\ge m^2+m^2=2m^2= \frac{(n-1)^2}{2}.\]
\end{proof}
By Theorem \ref{th:centroext}, the extreme points of $\Omega_n^{\pi}$ when $n$ is even are the $n\times n$ centrosymmetric permutation matrices.
We next characterize the extreme points of  $\Omega_n^{\pi}$ when $n$ is odd. 

Let $X$ be an $n\times n$ matrix.  If $\emptyset\ne K,L\subseteq \{1,2,\ldots,n\}$, then $X[K,L]$ denotes the $|K|\times |L|$ submatrix of $X$ determined by rows with index in $K$ and  columns with index  in $L$; $X(K,L)$ denotes the complementary  submatrix $X[\overline{K},\overline{L}]$. Now let $n=2m+1$ be odd, and let $|K|=|L|=r$. If we have that
\begin{itemize}
\item[\rm (a)] $m+1\in K\cap L$, 
\item[\rm (b)] $\{i:i\in K\}=\{n+1-i:i\in K\}$, and 
\item[\rm (c)]$\{j\in L\}=\{n+1-j:j\in L\}$, 
\end{itemize}
then we call $X[K,L]$ an $r\times r$   {\it centro-submatrix} of $X$. A centro-submatrix of $X$ is $r\times r$ for some odd integer $r$. Note that if $X[K,L]$ is a centro-submatrix of $X$, then $X^{\pi}[K,L]=X[K,L]^{\pi}$. If $X$ is centrosymmetric, then so is any centro-submatrix of $X$. If the submatrices $X[K,\overline{L}]$ and $X[\overline{K},L]$ are zero matrices, then we say that $X[K,L]$ is {\it isolated}. 

In what follows, it is convenient to refer to the {\it bipartite multigraph} G$(X)$
associated with a $k\times k$ nonnegative integral matrix $X=[x_{ij}]$. This graph has $2m$ vertices $\{u_1,u_2,\ldots,u_k\}\cup\{v_1,v_2,\ldots,v_k\}$ (corresponding to the rows and columns, respectively) with $x_{ij}$ edges between $u_i$ and $v_j$ $(1\le i,j\le k)$.
Let $r=2l-1$ be an odd integer and let $P=[p_{ij}]$ be an $r\times r$ permutation matrix. We say that $P$ is an {\it anti-centrosymmetric permutation matrix}  provided that $p_{ij}=1$ implies that $p_{r+1-i,r+1-j}=0$, in particular, $p_{ll}=0$. 
If $P$ is anti-centrosymmetric, then $P+P^{\pi}$ is a centrosymmetric $(0,1)$-matrix with exactly two 1's in each row and column. We say that $P+P^{\pi}$ is {\it cyclic}
provided that its associated bipartite graph is a cycle of length $2r$, that is, can be permuted to the form illustrated in (\ref{eq:bipcycle})  for $n=5$:
\begin{equation}\label{eq:bipcycle}
\left[\begin{array}{c|c|c|c|c}
1&1&&&\\ \hline
&1&1&&\\ \hline
&&1&1&\\ \hline
&&&1&1\\ \hline
1&&&&1\end{array}\right].\end{equation}

If $n$ is odd, the characterization of the extreme points of $\Omega_n^{\pi}$ involves anti-centrosymmetric, cyclic permutation matrices.

\begin{theorem}\label{th:csextreme}
Let $n=2m-1\ge 3$ be an odd integer. Then 
${\mathcal E}(\Omega_n^{\pi})$ is the set of $n\times n$ matrices $A=[a_{ij}]$ for which there exists a permutation matrix $Q=[q_{ij}]$ such that $A=\frac{1}{2}(Q+Q^{\pi})$ and one of the following holds:
\begin{itemize}
\item[\rm (a)] $q_{mm}=1$ and $Q$ is a centrosymmetric permutation matrix $($thus $Q=Q^{\pi}$  and $A$ is a centrosymmetric permutation matrix$)$,
or
\item[\rm (b)] $q_{mm}=0$ and there  exists an odd integer $r$, an
$r\times r$  anti-centrosymmetric permutation matrix $P$ such that $P+P^{\pi}$ is cyclic, and   an isolated   centro-submatrix $A[K,L]$  of $A$ with $r=|K|=|L|$  and
$A[K,L]=\frac{1}{2}(P+P^{\pi})$ such that
the  complementary submatrix $A[{\overline{K}}, {\overline{L}}]$ is a centrosymmetric permutation matrix $($thus $Q[\overline{K},\overline{L}]=Q[\overline{K},\overline{L}]^{\pi})$.
\end{itemize}
\end{theorem}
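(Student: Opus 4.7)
The plan is to begin from Theorem~\ref{th:centroext}, which reduces the problem to deciding which matrices of the form $A=\frac{1}{2}(Q+Q^{\pi})$, with $Q$ an $n\times n$ permutation matrix, are extreme points of $\Omega_n^{\pi}$. I would test extremity by writing $A=\frac{1}{2}(R_1+R_2)$ with $R_1,R_2\in\Omega_n^{\pi}$, equivalently $R_1+R_2=Q+Q^{\pi}$ with each $R_k$ centrosymmetric, nonnegative, doubly stochastic, and entrywise at most $1$. Since the entries of $Q+Q^{\pi}$ lie in $\{0,1,2\}$, every position with entry $2$ forces $R_1=R_2=1$, while every position with entry $1$ admits a parameter $t_{ij}\in[0,1]$ via $R_1(i,j)=t_{ij}$ and $R_2(i,j)=1-t_{ij}$, subject to the centrosymmetry relations $t_{ij}=t_{n+1-i,n+1-j}$ and the row and column sum constraints. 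Thus $A$ is extreme if and only if this affine system admits only the trivial solution $t\equiv 1/2$.

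Next I would introduce the bipartite graph $H$ on $\{u_1,\dots,u_n\}\cup\{v_1,\dots,v_n\}$ whose edges are the $1$-positions of $Q+Q^{\pi}$. Since every row and column of $Q+Q^{\pi}$ not already saturated by a $2$ contains exactly two $1$'s, $H$ is $2$-regular on its support and decomposes into even cycles. The involution $\pi$ acts on $H$ and its cycles fall into three types: (i) cycles paired with a distinct $\pi$-image cycle; (ii) $\pi$-fixed cycles avoiding both $u_m$ and $v_m$; (iii) $\pi$-fixed cycles passing through both $u_m$ and $v_m$. On cycles of type (i) or (ii), the standard alternating assignment $t=x,1-x,x,\dots$ around the cycle yields a one-parameter family of admissible $t$ compatible with centrosymmetry, hence a nontrivial decomposition of $A$. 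For type (iii), the fact that $u_m$ and $v_m$ are the only $\pi$-fixed row and column vertices forces them to be antipodal on the cycle, so the cycle has length $2r$ with $r$ odd; then $\pi$ identifies edge position $i$ with edge position $2r-1-i$, which have opposite parities since $r$ is odd, so the alternation collapses to $x=1-x$, forcing $x=1/2$. This parity/rigidity computation is the crux of the argument and I expect it to be the main obstacle.

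Combining these observations, $A$ is extreme iff $H$ consists of at most a single cycle, necessarily of type~(iii). If $q_{mm}=1$, the $(m,m)$-entry of $Q+Q^{\pi}$ equals $2$, which excludes $v_m$ from $H$; this forces $H=\emptyset$, hence $Q=Q^{\pi}$, giving case~(a). If $q_{mm}=0$, both $u_m$ and $v_m$ lie in $H$, which must therefore be a single central cycle whose row and column index sets $K,L$ are $\pi$-invariant, contain $m$, and satisfy $|K|=|L|=r$ odd. Setting $P=Q[K,L]$, I would verify: (1) $A[K,L]$ is isolated, using $\pi$-invariance of $K,L$ together with the fact that each row of $Q$ indexed by $K$ has its unique $1$ in a column of $L$; (2) $P$ is an $r\times r$ permutation matrix that is anti-centrosymmetric, since positions in $K\times L$ never carry $2$-entries of $Q+Q^{\pi}$; (3) $P+P^{\pi}$ is cyclic by construction of $H$; and (4) $A[\overline{K},\overline{L}]=Q[\overline{K},\overline{L}]$ collects exactly the $2$-positions of $Q+Q^{\pi}$ and is a centrosymmetric permutation matrix. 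This yields case~(b).

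For the converse direction, in case (a) the matrix $A$ is itself a centrosymmetric permutation matrix, hence extreme in $\Omega_n\supseteq\Omega_n^{\pi}$ by Birkhoff's theorem. In case (b), any decomposition $R_1+R_2=Q+Q^{\pi}$ is forced to have $R_1=R_2$ on the $2$-positions of $Q+Q^{\pi}$, which covers $\overline{K}\times\overline{L}$, while the parity computation above applied to the single central cycle supported on $K\times L$ forces the remaining parameters to $1/2$; hence $R_1=R_2=A$, and $A$ is extreme.
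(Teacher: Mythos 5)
Your proposal is correct and follows essentially the same route as the paper: both reduce to the cycle decomposition of the bipartite (multi)graph of $Q+Q^{\pi}$ and analyze the action of $\pi$ on its cycles, with the unique $\pi$-fixed cycle through row and column $m$ forced to have odd half-length and to be rigid, while every other component admits a splitting (your continuous perturbation $t\mapsto x,1-x,\dots$ is the affine version of the paper's splitting $B[K_p,L_p]=S+S^{\pi}$ into two centrosymmetric permutation matrices). Your explicit parity computation for the central cycle supplies what the paper dismisses as ``easily checked,'' and your type~(ii) (a $\pi$-fixed non-central cycle) is in fact vacuous for matrices of the form $Q+Q^{\pi}$ --- such a cycle would have to carry a perfect matching of $Q$ meeting each antipodal edge-pair exactly once, which is impossible when the half-length is even --- though including it does no harm.
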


Before giving the proof, we exhibit an example of an extreme point of type (b).

\begin{example}\label{ex:extreme}{\rm
First we note that in (b), the matrix $A[K,L]$ can be permuted to the  form given in $(\ref{eq:C_k})$ .
Let 
\[Q=\left[\begin{array}{c|c|c|c|c}
&1&&&\\ \hline
1&&&&\\ \hline
&&&1&\\ \hline
&&&&1\\ \hline
&&1&&\end{array}\right],\mbox{ where }
A=\frac{1}{2}(Q+Q^{\pi})=\frac{1}{2}\left[\begin{array}{c|c|c|c|c}
&1&1&&\\ \hline
2&&&&\\ \hline
&1&&1&\\ \hline
&&&&2\\ \hline
&&1&1&\end{array}\right].
\]
Let
\[P=\left[\begin{array}{ccc}
0&1&0\\1&0&0\\ 0&0&1\end{array}\right],\] an anti-centrosymmetric permutation matrix,
and let
$K=\{1,3,5\}$ and $L=\{2,3,4\}$.  Then  $A[K,L]$ is isolated with 
\[A[K,L]=\frac{1}{2}(P+P^{\pi})=\frac{1}{2}\left[\begin{array}{ccc}
1&1&0\\
1&0&1\\
0&1&1\end{array}\right]\]
where $(P+P^{\pi})$ is cyclic. We also have
\[A[\overline{K},\overline{L}]=\left[\begin{array}{cc} 1&0\\0&1\end{array}\right],\]
a centrosymmetric permutation matrix.
}\hfill{$\Box$}
\end{example}

\begin{proof} (of Theorem \ref{th:csextreme})
Let $A=[a_{ij}]\in\Omega_n^{\pi}$. Since $A$ is doubly stochastic, $A$ is a convex combination of  permutation matrices 
\[A=\sum_{i=1}^r c_iP_i.\]
We have
\[A=A^{\pi}=\sum_{i=1}^r c_iP_i^{\pi},\] and hence
\begin{equation} \label{eq:centrosum}
A=\sum_{i=1}^r c_i\left(\frac{1}{2}(P_i+P_i^{\pi})\right).\end{equation}
Thus the extreme points of $\Omega_n^{\pi}$ are all of the form
$\frac{1}{2}(P+P^{\pi})$ for some $n\times n$ permutation matrix $P$.
If $a_{mm}=1$, then it follows from Theorem \ref{th:centroext} that $A$ is an extreme point of $\Omega_n^{\pi}$ if and only if deleting row and column $m$ gives a centrosymmetric permutation matrix.

Now assume that $a_{mm}\ne 1$. By (\ref{eq:centrosum}), if $A$ is an extreme point of $\Omega_n^{\pi}$, then its entries are contained in $\{0,\frac{1}{2},1\}$.  From centrosymmetry we see that $\sum_{i=1}^{2m-1} a_{mi}\ge 2(\frac{1}{2})+a_{mm}=1+a_{mm}$,  and it follows that $a_{mm}=0$.

 A centrosymmetric doubly stochastic matrix with the structure given in (b) is an extreme point of $\Omega_n^{\pi}$ since it is easily checked that its zeros determine a unique matrix in $\Omega_n^{\pi}$. Our remaining task is to prove that every $n\times n$ matrix $A=[a_{ij}]=\frac{1}{2}(Q+Q^{\pi})$ where $Q$  is a permutation matrix and $A$ is an  extreme point of $\Omega_n^{\pi}$ with $a_{mm}=0$ satisfies (b).
 Assume we have such an $A$. Then $B=[b_{ij}]=Q+Q^{\pi}$ is a $(0,1,2)$-matrix  each of whose row and column sums equals 2.  Thus the bipartite multigraph BG$(B)$ consists of a pairwise disjoint collection of cycles and double edges (which we regard as cycles of length 2).
Consider a cycle corresponding to row indices $K$ and column indices $L$, where we must have $|K|=|L|$. Since $B$ is centrosymmetric, there is also a cycle corresponding to row indices $K'=\{n+1-i: i\in K\}$ and $L'=\{n+1-j:j\in L\}$, and one of the following holds:
\begin{itemize}
\item[\rm (a)] $K=K'$ and $L=L'$, and thus $B[K,L]$ is an isolated and centrosymmetric, centro-submatrix of $B$ with all row and column sums equal to 2, or
\item[\rm (b)] $K\cap K'=\emptyset$ and $L\cap L'=\emptyset$ and thus $B[K\cup K',L\cup L']$ is an isolated  and centrosymmetric centro-submatrix of $B$.
\end{itemize}
 It follows that there exist
 two partitions $K_1,K_2,\ldots,K_s$ and $L_1,L_2,\ldots,L_s$ 
 of $\{1,2,\ldots,n\}$ such that 
 $B[K_i,L_i]$  is an isolated and centrosymmetric centro-submatrix of $B$ for each $i=1,2,\ldots,s$. The bipartite multigraphs  
 BG$(B[K_i,L_i])$ are  either  cycles, or consist of two disjoint cycles which are 180 degrees rotations of one another.
 
 Exactly one of the pairs $K_i,L_i$, say $K_1,L_1$, satisfies  $m\in K_1\cap L_1$, and then $|K_1|=|L_1|$ is  odd and the bipartite multigraph BG$(B[K_1,L_1])$ is one cycle of length $2r$ for some odd integer $r$.  Suppose there exists a $p\ne 1$
 such that $|K_p|=|L_p|>2$. Then  BG$(B[K_p,L_p])$ consist of two disjoint cycles which are 180 degrees rotations of one another. Thus
 \[B[K_p,L_p]=S+S^{\pi}\]
 where $S$ and $S^{\pi}$ are centrosymmetric permutation matrices. It then follows that the matrices $C_1$ and $C_2$ obtained from $B=Q+Q^{\pi}$ by replacing
 $B[K_p,L_p]$ with $S$ and $S^{\pi}$, respectively, are centrosymmetric permutation matrices and $B=S+S^{\pi}$.  Therefore, $A=\frac{1}{2}(S+S^{\pi})$ implying that $A$ is not an extreme point of $\Omega_n^{\pi}$.
 We conclude  that $B[\overline{K_1},\overline{L_1}]=2U$ where $U$ is a centrosymmetric permutation matrix and $B[K_1,L_1]$ is an isolated, centrosymmetric centro-submatrix of $B$. Therefore $A$ has the properties given in (b).
\end{proof}

Since, in the case of $n$ odd, there are extreme points of $\Omega_n^{\pi}$ that are not centrosymmetric permutation matrices, the maximum number of linearly independent centrosymmetric $n\times n$ permutation matrices may be less than $\dim \Omega_n^{\pi}$. In fact, we have the following. Let $n=2m+1$. Then an $n\times n$ centrosymmetric permutation matrix $P=[p_{ij}]$ satisfies $p_{m+1,m+1}=1$, from which it follows that the maximum number of linearly independent $(2m+1)\times (2m+1)$ centrosymmetric permutation matrices equals the maximum number of linearly independent $2m\times 2m$ centrosymmetric permutation matrices and thus equals $((2m-1)^2+1)/2+1$. Notice that this number is $2m$ less than $\dim \Omega_{2m}^{\pi}$.

We conclude this section with some comments related to latin squares. Recall that an $n\times n$ latin square is a matrix
$T=[t_{ij}]$ with entries from $\{1,2,\ldots,n\}$ with no repeated integer in a row or column. Thus
$T=1P_1+2P_2+\cdots+nP_n$ for some permutation matrices $P_1,P_2,\ldots,P_n$ satisfying $P_1+P_2+\cdots+P_n=J_n$ where $J_n$ is the $n\times n$ matrix of all 1's. A centrosymmetric latin square is one where
$T^{\pi}=T$ and so necessarily $n$ is even, say $n=2m$. It is easy to construct $2m\times 2m$ centrosymmetric latin squares from $m\times m$ latin squares $U_m$:
\[\left[\begin{array}{c|c}
U_m&U_m+mJ_m\\ \hline
(U_m+mJ_m)^{\pi}&U_m^{\pi}\end{array}\right].\]
Thus, in this construction, the integers $\{1,2,\ldots,m\}$ are isolated from the integers $\{m+1,m+2,\ldots,2m\}$. In general, any decomposition of $J_{2m}=Q_1+Q_2+\cdots+Q_{2m}$ as a sum of $2m$ centrosymmetric permutation matrices $Q_1,Q_2,\ldots,Q_{2m}$ gives a centrosymmetric latin square $1Q_1+2Q_2+\cdots+2mQ_{2m}$.

In the next section we consider the convex polytope of $n\times n$ doubly stochastic matrices which are both symmetric and Hankel-symmetric, and thus are centrosymmetric as well.

\section{Symmetric \& Hankel-Symmetric Doubly Stochastic Matrices}

Let $\Omega_n^{t\&h}$ be the set of $n\times n$ doubly stochastic matrices $A$ that are both symmetric  and Hankel-symmetric\footnote{Sometimes also called {\it doubly symmetric}.} ($A^t=A$ and $A^h=A$) and so centrosymmetric ($A^{\pi}=A$) as well. Then $\Omega_n^{t\&h}$ is a convex polytope and is a subpolytope of $\Omega_n^{\pi}$.
Let ${\mathcal P}_n^{t\&h}$  be the set of $n\times n$ symmetric and Hankel-symmetric permutation matrices. Each matrix in  ${\mathcal P}_n^{t\&h}$  is clearly an extreme point of  $\Omega_n^{t\&h}$ but in general there are other extreme points.
The number of  $n\times n$ symmetric and Hankel-symmetric permutation matrices is known \cite{BBS}. This is because to say that a permutation matrix $P$ is symmetric means that it is an involution ($P^2=I_n$). In \cite{BM} the elementary observation is made   that any two of the three properties
of symmetric, Hankel-symmetric, and centrosymmetric implies the third. In \cite{BBS} it is shown that the
number of centrosymmetric involutions of order $n=2k$ equals
\begin{equation}\label{eq:number}
\sum_{h=0}^{\lfloor \frac{k}{2}\rfloor}\frac{(2k)!!}{(k-2h)!h!2^{2h}},\end{equation}
where the {\it double factorial} $(2k)!!$ equals $(2k)(2k-2)\cdots (2)$. If $n=2k+1$ is odd, then in a centrosymmetric permutation, $(k+1)$ is a fixed point so that (\ref{eq:number}) also gives the 
number of symmetric and Hankel-symmetric permutations of order $n=2k+1$.

\begin{example}\label{ex:sym&hankel} {\rm The following is a symmetric and Hankel-symmetric doubly stochastic matrix, indeed a permutation matrix:
\[A_1=\left[\begin{array}{c|c|c|c|c}
&&&1&\\ \hline
&&&&1\\ \hline
&&1&&\\ \hline
1&&&&\\ \hline
&1&&\end{array}\right].\]
Its associated loopy graph $G(A_1)$ consists of vertices $1,2,3,4,5$ and edges $\{1,4\}, \{2,5\}$, and $\{3\}$ (a loop). We can also consider its {\it Hankel loopy graph} $G^h(A_1)$, the loopy graph  with respect to the Hankel diagonal.\footnote{Note that the graphs $G(A)$ and $G^h(A)$ are not 
considered as multigraphs; at most one edge (corresponding to a nonzero entry) joins a pair of vertices. This is in contrast to the bipartite multigraph BG$(A)$ in which more than one edge can join a pair of vertices, equivalently,  edges have weights assigned to them.}  This graph has the same vertex set $1,2,3,4,5$ but associated with rows labeled $1,2,3,4,5$ and columns labeled
$5,4,3,2,1$ in these orders. The edges of  $G^h(A)$ are $\{1,2\}$, $\{4,5\}$, and $\{3\}$.

Now consider the matrix in $\Omega_6^{t\&h}$ given by
\[A_2=\frac{1}{2}\left[\begin{array}{c|c|c|c|c|c}
&1&&&&1\\ \hline
1&&1&&&\\ \hline
&1&&1&&\\ \hline
&&1&&1&\\ \hline
&&&1&&1\\ \hline
1&&&&1&\end{array}\right].\]
Then  $G(A_2)$ is the cycle 1---2---3---4---5---6---1 of length 6 while $G^h(A_2)$ consists of two paths 1---5---3  and 6---2---4 with loops at vertices 1, 3, 4, and 6.
An easy check shows that any matrix in $\Omega_6^{t\&h}$ which has zeros 
in all the positions that $A$ has zeros must equal $A$. (Thus the matrix $A_2$ is uniquely determined in $\Omega_6^{t\&h}$ by its graph. From this it follows that $A$ is an extreme point of $\Omega_6^{t\&h}$.
\hfill{$\Box$} } 
\end{example}

We now turn to the dimension of the polytope $\Omega_n^{t\&h}$. An $n\times n$ matrix is both symmetric and Hankel-symmetric if and only if it is of the form
\begin{equation}\label{eq:hsmall}A=\left[\begin{array}{c|c}
A_1&A_2\\ \hline
A_2^{\pi}&A_1^{\pi}\end{array}\right]\mbox{ ($n=2m$ even), and } 
A=\left[\begin{array}{c|c|c}
A_1&u&A_2\\ \hline
u^t&x&{u^t}^\pi\\ \hline
A_2^{\pi}&u^{\pi}&A_1^{\pi}\end{array}\right]\mbox{ ($n=2m+1$ odd),}
\end{equation}
where $A_1$ is symmetric and $A_2$ is Hankel-symmetric.
First suppose that  $n=2m$ is even.
   Let $Q_1$ be an $m\times m$ permutation matrix and let $Q$ be the  centrosymmetric permutation matrix $Q_1\oplus Q_1^{\pi}$. Then
$QAQ^t$  is also a symmetric and Hankel-symmetric matrix obtained by reordering the first $m$ rows and columns of $A$ in the same way according to the permutation defined by $Q$ and reordering the last $m$ rows and columns in the same way according to the permutation defined by $Q^{\pi}$. 
If $n=2m+1$ is odd, then an $n\times n$ centrosymmetric permutation matrix has a 1 in position $(m+1,m+1)$ (the corresponding permutation fixes $m+1$) and the above observations apply in the obvious way.

\begin{theorem}\label{th:dims&t}
Let $n$ be a positive integer. Then
\[\dim \Omega_n^{t\&h}=\left\{\begin{array}{ll}
\frac{n^2}{4} &\mbox{ if $n$ is even,}\\
\frac{n^2-1}{4}& \mbox{ if $n$ is odd.}\end{array}\right.\]
\end{theorem}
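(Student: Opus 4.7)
The plan is to compute $\dim\Omega_n^{t\&h}$ by parameterizing matrices in terms of the block decomposition (\ref{eq:hsmall}) and counting free parameters modulo the doubly-stochastic constraints. Writing $n=2m$ in the even case and $n=2m+1$ in the odd case, a symmetric and Hankel-symmetric matrix $A$ is fully determined by the symmetric block $A_1$ (contributing $\binom{m+1}{2}$ parameters) and the Hankel-symmetric block $A_2$ (another $\binom{m+1}{2}$ parameters), together, in the odd case, with the vector $u$ ($m$ parameters) and scalar $x$ (one parameter). Hence the ambient affine space of $n\times n$ symmetric and Hankel-symmetric real matrices has dimension $m(m+1)$ when $n$ is even and $(m+1)^2$ when $n$ is odd.

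The next step is to show that the row- and column-sum constraints cut this ambient space down by exactly $m$ when $n$ is even and $m+1$ when $n$ is odd. Since $A$ is symmetric, each column-sum constraint duplicates the corresponding row-sum constraint; and since $A$ is centrosymmetric (property (iv) of Section 2), the constraint for row $i$ duplicates that for row $n+1-i$. So only the row-sum equations for rows $1,\ldots,m$ in the even case, or for rows $1,\ldots,m+1$ in the odd case, can contribute. To show these remaining row-sum functionals $\phi_i:A\mapsto\sum_j A_{ij}$ are linearly independent on the symmetric-and-Hankel-symmetric space, I would exhibit dual test matrices: for $1\le i\le m$ take the doubly symmetric matrix $E_i$ whose only nonzero entries are $1$'s in positions $(i,i)$ and $(n+1-i,n+1-i)$, and in the odd case also take $E_{m+1}$ with a single $1$ at $(m+1,m+1)$. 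Then $\phi_j(E_i)$ is a nonzero scalar when $j=i$ and zero otherwise, which establishes independence.

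Combining the two steps yields the upper bounds $m(m+1)-m=m^2=n^2/4$ when $n$ is even and $(m+1)^2-(m+1)=m(m+1)=(n^2-1)/4$ when $n$ is odd. For the matching lower bound I would use the uniform matrix $(1/n)J_n$, which lies in $\Omega_n^{t\&h}$ with every entry equal to $1/n>0$. Any sufficiently small symmetric and Hankel-symmetric perturbation $\Delta$ with all row sums equal to zero keeps $(1/n)J_n+\Delta$ nonnegative and hence inside $\Omega_n^{t\&h}$; the linear space of such $\Delta$ is precisely the kernel of the $m$ (resp.\ $m+1$) row-sum functionals acting on the doubly symmetric space, which by the count above has exactly the required dimension. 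A ball of that dimension around $(1/n)J_n$ then sits inside $\Omega_n^{t\&h}$, giving the matching lower bound.

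The only delicate point is the linear independence of the row-sum functionals on the symmetric-and-Hankel-symmetric ambient space, but the explicit indicator matrices $E_i$ handle it routinely; the rest of the argument is bookkeeping with the decomposition (\ref{eq:hsmall}) plus the observation that $(1/n)J_n$ lies in the relative interior of $\Omega_n^{t\&h}$.
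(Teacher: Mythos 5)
Your argument is correct and follows essentially the same route as the paper: both get the upper bound by counting the free parameters of the blocks $A_1$ and $A_2$ in (\ref{eq:hsmall}) modulo the row-sum constraints (reduced to $m$, resp.\ $m+1$, of them via symmetry and centrosymmetry), and both get the matching lower bound by perturbing the uniform matrix $\frac{1}{n}J_n$ inside the space of symmetric, Hankel-symmetric matrices with zero line sums. Your explicit check that the surviving row-sum functionals are linearly independent, via the diagonal test matrices $E_i$, makes precise a step the paper leaves implicit, but it does not change the approach.
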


\begin{proof} We refer to (\ref{eq:hsmall}). First assume that $n=2m$ is even so that the assertion is that $\dim \Omega_n^{t\&h}=m^2$.
Since $A_1$ is symmetric, $A_1$ is determined by its ${m+1}\choose {2}$ entries on and above its main diagonal (a triangular matrix $A_1'$ with zeros below the main diagonal), and $A_2$ is then determined by its 
${m}\choose {2}$  entries below its Hankel diagonal (a triangular matrix $A_2'$ with zeros on and above its Hankel diagonal) and the property that the row sums of $A$ equal 1. Thus $\dim \Omega_n^{t\&h}\le m^2$. Moreover, $A_1'$
and $A_2'$ can be combined in the obvious way to yield an $m\times m$ matrix determined by $A_1'$ on and above the main diagonal and by $A_2'$ below the main diagonal.  Let $X'$ be an $m\times m$ matrix all of whose entries are nonzero with row and column sums all equal to zero. We may reverse the above construction and use $X'$ to construct a matrix $X$ of the form of $A$ which is symmetric and Hankel-symmetric and has all row and column sums equal to zero. Then
$J_n\pm \epsilon X$ is in $\Omega_n^{t\&h}$ for all $\epsilon$ small enough, establishing that $\dim \Omega_n^{t\&h}\ge m^2$ as well.

Now assume that $n=2m+1$ is odd so that the assertion is that $\dim \Omega_n^{t\&h}=m(m+1)$. We can proceed in a similar way as in the even case. The matrix $A_1$ is determined by its ${m+1}\choose {2}$ entries on and above its main diagonal, and $A_2$ is determined by its 
${m+1}\choose {2}$  entries on and above its Hankel diagonal. The entries of $u$ are now determined by the property that the row sums of $A$ equal 1. Thus
$\dim \Omega_n^{t\&h}\le m(m+1)$. Thus the dimension of $ \Omega_n^{t\&h}$ does not exceed that given in the theorem. Let $X'$ and $D'$ be $m\times m$ matrices with $D$ a Hankel-diagonal matrix, such that  all entries of $X'$ are nonzero, all Hankel-diagonal entries of $D'$ nonzero, and   with row and column sums all equal to zero. We may use $X'$ and $D'$ to construct a matrix $X+D$ of the form of $A$ which is symmetric and Hankel-symmetric and has all row and column sums equal to zero. Then
$J_n\pm \epsilon (X+D)$ is in $\Omega_n^{t\&h}\le m^2$ for all $\epsilon$ small enough, establishing that $\dim \Omega_n^{t\&h}\ge m(m+1)$ as well.
\end{proof}

It follows from Theorem \ref{th:dims&t} that there are at most
$\frac{n^2}{4}+1$ linearly independent  symmetric and Hankel-symmetric $n\times n$ permutation matrices if $n$ is even, and at most
$\frac{n^2-1}{4}+1$ if $n$ is odd. We now construct  a set of
linearly independent  symmetric and Hankel-symmetric $n\times n$ permutation matrices of maximum size, that is, a basis for the real linear space ${\Re } ({\mathcal P}_n^{t\& h})$ spanned by the symmetric and Hankel-symmetric $n\times n$ permutation matrices.

Let $n=2m$. For $1\le i<j\le m$, we denote by $T_m^{i,j}$ the $m\times m$ permutation matrix corresponding to the transposition $(i,j)$.
Similary,  $H_m^{i,j}$ denotes the $m\times m$ permutation matrix corresponding to the {\it Hankel transposition} $(i,j)$ taken with respect to the Hankel diagonal; reversing the columns of $H_m^{i,j}$  gives the matrix $T_m^{i,j}$. Let
\[\tilde{T}_n^{i,j}=\left[\begin{array}{cc} T_m^{i,j}&O_m\\ O_m&{T_m^{i,j}}^{\pi}\end{array}\right]
\mbox{ and }
\tilde{H}_n^{i,j}=\left[\begin{array}{cc} O_m&H_m^{i,j}\\ {H_m^{i,j}}^{\pi}&O_m\end{array}\right]\quad (1\le i<j\le m).\]
For $0\le i\le m$, let $A_i$ be the $m\times m$ $(0,1)$-matrix with 1's only in the first $i$ positions of the main diagonal, and let $B_i$ be the $m\times m$ $(0,1)$-matrix with 1's only in the last $m-i$ positions of the Hankel diagonal. Finally, let
\[U_i=\left[\begin{array}{cc} A_i&B_i\\ B_i^{\pi}&A_i^{\pi}\end{array}\right] \quad (0\le i\le m).\]
Note that $U_m=I_m$ and $U_0=H_m$. Let
\[{\mathcal B}_n=\{\tilde{T}_n^{i,j}: 1\le i<j\le m\}\cup 
\{\tilde{H}_n^{i,j}: 1\le i<j\le m\}\cup\{U_i:0\le i\le m\},\]
a set of $n\times n$ symmetric and Hankel-symmetric permutation matrices.
Then
\[|{\mathcal B}_n|=2{m\choose 2}+m+1=m^2+1=\frac{n^2}{4}+1.\]

\begin{theorem}\label{th:s&hsindepeven} Let $n$ be an even  positive integer. Then the set  ${\mathcal B}_n$ forms a basis of the vector space ${\Re}({\mathcal P}_n^{t\&h})$ spanned by the symmetric and Hankel-symmetric 
$n\times n$ permutation matrices.
\end{theorem}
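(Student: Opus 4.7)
\noindent\emph{Proof proposal.} The plan is to combine a dimension upper bound with a direct verification of linear independence of $\mathcal{B}_n$. By Theorem~\ref{th:dims&t}, for $n=2m$ the polytope $\Omega_n^{t\&h}$ has affine dimension $m^2$; since it does not contain the zero matrix, its linear span has dimension at most $m^2+1=|\mathcal{B}_n|$, and this linear span contains $\Re(\mathcal{P}_n^{t\&h})$ because every symmetric and Hankel-symmetric permutation matrix is an extreme point of $\Omega_n^{t\&h}$. Hence it will suffice to show that $\mathcal{B}_n$ is linearly independent.

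To that end I would start from a hypothetical relation
\[\sum_{1\le i<j\le m} a_{ij}\,\tilde{T}_n^{i,j}+\sum_{1\le i<j\le m} b_{ij}\,\tilde{H}_n^{i,j}+\sum_{i=0}^{m} c_i\,U_i=O_n\]
and exploit the $2\times 2$ block structure common to all the generators. The $\tilde{T}_n^{i,j}$ place nonzeros only in the two diagonal blocks, the $\tilde{H}_n^{i,j}$ only in the two antidiagonal blocks, and each $U_i$ puts $A_i$ in its top-left block and $B_i$ in its top-right block. Because every summand is symmetric and Hankel-symmetric, the bottom blocks of the identity vanish automatically once the top two do, so all the information lives in the top-left and top-right $m\times m$ blocks.

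The top-left block reads $\sum a_{ij}T_m^{i,j}+\sum_{i=1}^{m} c_i A_i=O_m$. An off-diagonal position $(p,q)$ with $p<q$ is hit only by $T_m^{p,q}$, forcing $a_{pq}=0$ for every $p<q$. The diagonal position $(k,k)$ then collapses to $\sum_{i\ge k}c_i=0$, since $(A_i)_{kk}=1$ precisely when $i\ge k$; running $k$ from $m$ down to $1$ yields $c_m=c_{m-1}=\cdots=c_1=0$ by back-substitution. With these $c_i$ now killed, the top-right block becomes $\sum b_{ij}H_m^{i,j}+c_0\,L_m=O_m$. An off-Hankel-diagonal entry $(p,q)$ with $p+q<m+1$ is hit by exactly one $H_m^{i,j}$, namely the one with $(i,j)=(p,\,m+1-q)$, and the map $(p,q)\mapsto(p,m+1-q)$ is a bijection onto $\{(i,j):1\le i<j\le m\}$; this forces $b_{ij}=0$ for all such pairs, and then any single Hankel-diagonal entry gives $c_0=0$.

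The one spot that really needs care is verifying the two ``hit-by-exactly-one'' claims in the previous paragraph, especially for the Hankel transpositions: one must confirm that the second $1$ of $H_m^{i,j}$, located at $(j,m+1-i)$, always lies on the opposite side of the Hankel diagonal from $(i,m+1-j)$ and so cannot contaminate the positions I am reading off. Once that bookkeeping is in place, the triangular structure of the elimination in the $c_i$'s makes the remainder of the argument routine.
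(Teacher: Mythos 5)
Your proposal is correct and takes essentially the same route as the paper: the dimension count from Theorem~\ref{th:dims&t} reduces everything to the linear independence of ${\mathcal B}_n$, which is then verified positionally (off-diagonal positions force the transposition coefficients to vanish, after which the main- and Hankel-diagonal positions eliminate the $c_i$). Your block-by-block elimination, including the check that the two $1$'s of $H_m^{i,j}$ lie on opposite sides of the Hankel diagonal, is just a more carefully written version of the paper's terser argument.
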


\begin{proof} Since $|{\mathcal B}_n|=\dim {\Re }({\mathcal P}_n^{t\&h})$, we only need establish that ${\Re}({\mathcal P}_n^{t\&h})$ is linearly independent. Let $2m\times 2m$ permutation  matrices be defined by
$\{\tilde{T}_n^{i,j}: 1\le i<j\le m\}\cup 
\{\tilde{H}_n^{i,j}: 1\le i<j\le m\}$ is clearly linearly independent since each has an entry equal to 1 where the other matrices all have 0's.
Each matrix in the  set $\{U_i:0\le i\le m\}$ of matrices is clearly linearly independent has zeros in all positions not on its main or Hankel diagonal; a nontrivial linear combination of matrices in the set
$\{\tilde{T}_n^{i,j}: 1\le i<j\le m\}\cup 
\{\tilde{H}_n^{i,j}: 1\le i<j\le m\}$ cannot share this property. It follows that ${\mathcal B}_n$  is a linearly independent set and hence a basis of ${\Re}({\mathcal P}_n^{t\&h})$.
\end{proof}

Thus in the even case, there is a linearly independent set of 
$n\times n$ symmetric and Hankel-symmetric permutation matrices 
of size $\dim \Omega_n^{t\& h}+1$. This does not hold in the odd case. Let $n=2m+1$ be odd and let ${\mathcal B}^*_n$ be obtained
from ${\mathcal B}_{n-1}$ by inserting in each of its matrices as row $m+1$ and column $m+1$, respectively, a new row $(0,1)$-vector and a new column $(0,1)$-vector
each of size $2m+1$ whose only 1 is in their $m+1$ positions. Thus
$|{\mathcal B}^*_n|=|{\mathcal B}_{n-1}|$.

\begin{theorem}\label{th:s&hsindepodd} Let $n$ be an odd positive integer. Then the set  ${\mathcal B}^*_n$ forms a basis of the vector space ${\Re}({\mathcal P}^{s\&hs})$ spanned by the symmetric and Hankel-symmetric 
$n\times n$ permutation matrices. This basis has size $\dim\Omega_n^{s\&hs}-\frac{n-1}{2}+1$.
\end{theorem}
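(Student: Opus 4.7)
The plan is to reduce the odd case directly to the even case handled by Theorem \ref{th:s&hsindepeven}. The pivotal observation is that when $n=2m+1$ is odd, every centrosymmetric permutation matrix $P=[p_{ij}]$ satisfies $p_{m+1,m+1}=1$: the condition $\sigma(i)+\sigma(n+1-i)=n+1$ with $i=m+1$ forces $\sigma(m+1)=m+1$. Since by property (iv) every matrix in $\mathcal{P}_n^{t\&h}$ is centrosymmetric, this applies to every $P\in\mathcal{P}_n^{t\&h}$.

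First I would define a map $\Phi:\mathcal{P}_n^{t\&h}\to \mathcal{P}_{n-1}^{t\&h}$ by deleting row $m+1$ and column $m+1$. Because that row and column each have their unique $1$ in the central position, $\Phi(P)$ is again a permutation matrix, and symmetry, Hankel-symmetry and centrosymmetry are all preserved under this deletion (the latter because the transpose, Hankel-transpose and $180^\circ$ rotation all commute with the removal of the central crosshair). The insertion operation used to build $\mathcal{B}^*_n$ from $\mathcal{B}_{n-1}$ is exactly the inverse of $\Phi$, so $\Phi$ is a bijection and in particular $\Phi(\mathcal{B}^*_n)=\mathcal{B}_{n-1}$.

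Next I would extend $\Phi$ by linearity to $\widetilde{\Phi}:\Re(\mathcal{P}_n^{t\&h})\to \Re(\mathcal{P}_{n-1}^{t\&h})$. Since $\Phi$ is a bijection on spanning sets, $\widetilde{\Phi}$ is a vector space isomorphism, so it maps bases to bases. By Theorem \ref{th:s&hsindepeven}, $\mathcal{B}_{n-1}$ is a basis of $\Re(\mathcal{P}_{n-1}^{t\&h})$; therefore $\mathcal{B}^*_n=\widetilde{\Phi}^{-1}(\mathcal{B}_{n-1})$ is a basis of $\Re(\mathcal{P}_n^{t\&h})$.

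Finally I would verify the size formula arithmetically. With $n=2m+1$,
\[
|\mathcal{B}^*_n|=|\mathcal{B}_{n-1}|=m^2+1,
\]
while by Theorem \ref{th:dims&t}, $\dim\Omega_n^{t\&h}=m(m+1)$, so
\[
\dim\Omega_n^{t\&h}-\tfrac{n-1}{2}+1 = m(m+1)-m+1 = m^2+1,
\]
matching $|\mathcal{B}^*_n|$. There is no real obstacle here; the only point requiring care is confirming that $\Phi$ genuinely lands in $\mathcal{P}_{n-1}^{t\&h}$, which is immediate once one records that the central entry is forced to be $1$.
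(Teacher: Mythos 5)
Your proposal is correct and follows essentially the same route as the paper: observe that centrosymmetry forces the central entry of every $P\in{\mathcal P}_n^{t\&h}$ to be $1$, delete the central row and column to reduce to the even case of Theorem \ref{th:s&hsindepeven}, and check the arithmetic. The only step worth tightening is the claim that a linear map bijecting one spanning set onto another is automatically an isomorphism (it need not be in general); here it suffices to note that applying the deletion map to a vanishing linear combination of ${\mathcal B}^*_n$ yields a vanishing combination of ${\mathcal B}_{n-1}$, which gives the linear independence directly, and a dimension or row-sum argument supplies the rest.
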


\begin{proof}
Let $n=2m+1$. By symmetry and Hankel-symmetry it follows that a permutation matrix $P$ in ${\mathcal P}_n^{s\&hs}$ must have a 1 in position $(m+1,m+1)$ with zeros  in all other positions of row $m+1$ and column $m+1$. Thus deleting row $m+1$ and column $m+1$ in $P$ leaves a symmetric and Hankel-symmetric permutation matrix in ${\mathcal P}_{n-1}^{s\&hs}$. The matrices in  ${\mathcal B}^*_n$ then give a set of linearly independent $n\times n$ symmetric and Hankel-symmetric permutation matrices of maximum size.\end{proof}

We now turn to the extreme points of the $n\times n$ symmetric and Hankel symmetric, doubly stochastic matrices.

\begin{lemma}\label{lem:genextremepoints}
The extreme points of $\Omega_n^{t\&h}$ are all of the form
\begin{equation}\label{eq:genextremepoints}
\frac{1}{4}(P+P^t+P^h+P^{\pi})=\frac{1}{2}(R+R^t)\end{equation}
for some permutation matrix $P$, where $R=\frac{1}{2}(P+P^{\pi})$ is centrosymmetric.
\end{lemma}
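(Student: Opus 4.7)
The plan is to mimic the averaging argument used in Lemma \ref{lem:symext} and Lemma \ref{lem:hankelext}, but now averaging over all four images of a permutation matrix under the group generated by transposition and Hankel-transposition. Let $A\in\Omega_n^{t\&h}$. Since $A\in\Omega_n$, Birkhoff's theorem gives a convex decomposition $A=\sum_{i=1}^k c_iP_i$ with $c_i>0$ and $\sum c_i=1$. Because $A^t=A$, $A^h=A$ and (by property (iv) from Section 2) $A^{\pi}=A$, applying $t$, $h$, and $\pi$ to the decomposition yields
\[A=\sum_{i=1}^k c_iP_i=\sum_{i=1}^k c_iP_i^t=\sum_{i=1}^k c_iP_i^h=\sum_{i=1}^k c_iP_i^{\pi}.\]
Averaging these four equalities gives
\[A=\sum_{i=1}^k c_i\left(\tfrac14(P_i+P_i^t+P_i^h+P_i^{\pi})\right),\]
which already expresses $A$ as a convex combination of matrices of the required form, provided we can verify that each such matrix lies in $\Omega_n^{t\&h}$.

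For that verification, I would use properties (ii) and (iii) of Section 2. For any permutation matrix $P$, the sum $M(P):=P+P^t+P^h+P^{\pi}$ is invariant under transposition because $(P^h)^t=P^{\pi}$ and $(P^{\pi})^t=P^h$, so $M(P)^t=P^t+P+P^{\pi}+P^h=M(P)$; similarly $M(P)^h=M(P)$ using $(P^t)^h=P^{\pi}$ and $(P^{\pi})^h=P^t$. Hence $\tfrac14 M(P)$ is a doubly stochastic matrix that is both symmetric and Hankel-symmetric, and thus lies in $\Omega_n^{t\&h}$.

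Once the convex decomposition of $A$ by matrices in $\Omega_n^{t\&h}$ is in hand, extremality of $A$ forces $A=\tfrac14(P+P^t+P^h+P^{\pi})$ for some permutation matrix $P$, establishing the first equality. For the second equality, set $R=\tfrac12(P+P^{\pi})$. Then $R^{\pi}=\tfrac12(P^{\pi}+(P^{\pi})^{\pi})=R$ by property (i), so $R$ is centrosymmetric, and using $(P^{\pi})^t=P^h$ from property (iii),
\[\tfrac12(R+R^t)=\tfrac14\bigl(P+P^{\pi}+P^t+(P^{\pi})^t\bigr)=\tfrac14(P+P^t+P^h+P^{\pi}),\]
giving the claimed identity. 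The argument is essentially routine; the only point that requires care is tracking the interaction of the three involutions $t$, $h$, and $\pi$, and this is handled entirely by the algebraic identities catalogued in properties (i)--(iii).
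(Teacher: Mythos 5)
Your proof is correct and follows essentially the same route as the paper: apply Birkhoff's theorem, use the invariance of $A$ under $t$, $h$, and $\pi$ to average the decomposition over the four images of each $P_i$, and then identify $\tfrac14(P+P^t+P^h+P^{\pi})$ with $\tfrac12(R+R^t)$ for $R=\tfrac12(P+P^{\pi})$. The only difference is that you make explicit the (routine) verification that each averaged matrix lies in $\Omega_n^{t\&h}$, which the paper leaves implicit.
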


\begin{proof}
Let $A\in\Omega_n^{t\&h}$. Since $\Omega_n^{t\&h}\subseteq \Omega_n$, $A$ is a convex combination of $n\times n$ permutation matrices,
\[A=\sum_{i=1}^mc_iP_i,\mbox{ where $P_i\in{\mathcal P}_n$, $c_i> 0$ $(1\le i\le m)$, and $\sum_{i=1}^mc_i=1$.}\] Since $A \in\Omega_n^{t\&h}$, we have
$A=A^t=A^h=A^{\pi}$ and thus $A$ also has the convex combination representation
\[
A=\sum_{i=1}^m c{_i}\left(\frac{1}{4} (P_i+P_i^t +P_i^h+P_i^{\pi})\right)\]
where 
\[\frac{1}{4}(P_i+P_i^t +P_i^h+P_i^{\pi})=
\frac{1}{2}\left(\frac{1}{2}(P_i+P_i^{\pi})+\frac{1}{2}(P_i+P_i^{\pi})^t\right)=\frac{1}{2}(R+R^t)\]
 and $R=\frac{1}{2}(P_i+P_i^{\pi})$ is centrosymmetric. Since every matrix in  $\Omega_n^{t\&h}$ is a convex combination of matrices of the form (\ref{eq:genextremepoints}), the extreme points of $\Omega_n^{t\&h}$ are of  this form.
\end{proof}

\begin{corollary}\label{cor:quarter} Each   entry of an extreme point of ${\Omega}_n^{t\& h}$ equals one of $0,\frac{1}{4},\frac{1}{2}, \frac{3}{4},1$, and thus has at most four nonzero entries in a row or column.\hfill{$\Box$}
\end{corollary}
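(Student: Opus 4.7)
The plan is to read off the corollary directly from Lemma \ref{lem:genextremepoints}. By that lemma, any extreme point $A$ of $\Omega_n^{t\&h}$ can be written as
\[A=\frac{1}{4}\bigl(P+P^t+P^h+P^{\pi}\bigr)\]
for some permutation matrix $P$. Since each of the four summands is a $(0,1)$-matrix (each being a permutation matrix, or equivalently the result of applying $t$, $h$, or $\pi$ to one), the sum $P+P^t+P^h+P^{\pi}$ is an integer matrix whose entries lie in $\{0,1,2,3,4\}$. Dividing by $4$ gives the claimed entry set $\{0,\tfrac14,\tfrac12,\tfrac34,1\}$.

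For the second statement, I would use the doubly stochastic property: every row (and column) of $A$ sums to $1$. Since each nonzero entry is at least $\tfrac14$ by the first part, the number of nonzero entries in any row or column is at most $1/\tfrac14=4$. No further work is required.

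There is no real obstacle: the corollary is an immediate consequence of the explicit representation of extreme points given in Lemma \ref{lem:genextremepoints}, together with the elementary fact that the sum of $k$ numbers each $\geq 1/4$ cannot exceed $1$ unless $k\leq 4$.
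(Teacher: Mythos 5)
Your proof is correct and is exactly the argument the paper intends: the corollary is stated with no written proof precisely because it follows immediately from Lemma \ref{lem:genextremepoints} in the way you describe (entries of $\frac{1}{4}(P+P^t+P^h+P^{\pi})$ lie in $\{0,\frac14,\frac12,\frac34,1\}$, and a row summing to $1$ with each nonzero entry at least $\frac14$ has at most four nonzero entries). Nothing further is needed.
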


Not every matrix of the form (\ref{eq:genextremepoints}) is an extreme point of 
$\Omega_n^{t\&h}$.

\begin{example}\label{ex:nonextreme}{\rm
Let 
\[P=\left[\begin{array}{c|c|c|c|c|c|c|c}
&&1&&&&&\\ \hline
&1&&&&&&\\ \hline
&&&1&&&&\\ \hline
&&&&&1&&\\ \hline
&&&&1&&&\\ \hline
&&&&&&1&\\ \hline
1&&&&&&&\\ \hline
&&&&&&&1\end{array}\right],\]
 be the permutation matrix  corresponding to the permutation of $\{1,2,\ldots,8\}$ given by $(3,2,4,6,5 ,7 ,1 ,8 )\in {\mathcal S}_8$ with a  5-cycle  $1\rightarrow 3\rightarrow 4\rightarrow 6\rightarrow 7\rightarrow 1$,  and the three fixed points 2, 5, and 8.
We have
\[\frac{1}{4} (P+P^t+P^h+P^{\pi}) =
\frac{1}{4}\left[\begin{array}{c|c|c|c|c|c|c|c}
2&&1&&&&1&\\ \hline
&2&1&&&&&1\\ \hline
1&1&&1&1&&&\\ \hline
&&1&2&&1&&\\ \hline
&&1&&2&1&&\\ \hline
&&&1&1&&1&1\\ \hline
1&&&&&1&2&\\ \hline
&1&&&&1&&2\end{array}\right],\]
a matrix in $\Omega_8^{t\&h}$ which is also the convex combination $\frac{1}{2}(R+R^{\pi})$ of the two distinct matrices $R$ and $R^{\pi}$ in $\Omega_8^{t\&h}$ given by: 
\[R=\frac{1}{4}\left[\begin{array}{c|c|c|c|c|c|c|c}
2&&1&&&&1&\\ \hline
&2&1&&&&&1\\ \hline
1&1&&2&0&&&\\ \hline
&&2&2&&0&&\\ \hline
&&0&&2&2&&\\ \hline
&&&0&2&&1&1\\ \hline
1&&&&&1&2&\\ \hline
&1&&&&1&&2\end{array}\right] \mbox{ and }
R^{\pi}=\frac{1}{4}\left[\begin{array}{c|c|c|c|c|c|c|c}
2&&1&&&&1&\\ \hline
&2&1&&&&&1\\ \hline
1&1&&0&2&&&\\ \hline
&&0&2&&2&&\\ \hline
&&2&&2&0&&\\ \hline
&&&2&0&&1&1\\ \hline
1&&&&&1&2&\\ \hline
&1&&&&1&&2\end{array}\right].\]
Thus $\frac{1}{4} (P+P^t+P^h+P^{\pi})$ is not an extreme point of 
$\Omega_8^{t\&h}$.}
\hfill{$\Box$}\end{example}

To characterize the extreme points of $\Omega_n^{t\&h}$ we first consider the case where $n$ is even.  In the next lemma we show that the sum of a permutation matrix $P$ and $P^{\pi}$ can always be represented as the sum of two centrosymmetric permutation matrices. 
In the proof of this lemma, we shall make use of the {\it multidigraph} $\Gamma(A)$ for an $n\times n$  nonnegative integral matrix $A=[a_{ij}]$. This multidigraph has vertex set $\{1,2,\ldots,n\}$ with $a_{ij}$ edges from vertex $i$ to vertex $j$ ($1\le i,j\le n$). If $A$ is a $(0,1)$-matrix, then $\Gamma(A)$ is a {\it digraph}, that is, no edge is repeated.

\begin{lemma}\label{lem:evencentroperm}
Let $P=[p_{ij}]$ be an $n\times n$ permutation matrix with $n$ even. Then there exist centrosymmetric permutation matrices $Q_1$ and $Q_2$ such that $P+P^{\pi}=Q_1+Q_2$. 
\end{lemma}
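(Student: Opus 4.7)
The plan is to extend the $2\times 2$ block argument used in the proof of Theorem~\ref{th:centroext}. The matrix $B = P + P^{\pi}$ is a centrosymmetric $(0,1,2)$-matrix with every row and column sum equal to $2$; equivalently, the multidigraph $\Gamma(B)$ has every vertex of in-degree and out-degree $2$ and is invariant under the vertex involution $i\mapsto n+1-i$. Since $n=2m$ is even, this involution is fixed-point-free, so the rows and columns partition into pairs $\{i, n+1-i\}$, and $B$ may be regarded as an $m\times m$ array of $2\times 2$ centrosymmetric blocks $B_{ij}=B[\{i, n+1-i\},\{j, n+1-j\}]$.

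First I would enumerate the possible block types. Each $B_{ij}$ has the form $\begin{pmatrix} a & b \\ b & a \end{pmatrix}$ by centrosymmetry, and a brief case analysis, based on the fact that $P$ contributes $0$, $1$, or $2$ ones to any block while $P^{\pi}$ contributes the same number at the centrally-symmetric positions, shows that each $B_{ij}$ is one of $O_2$, $I_2$, $L_2$, $2I_2$, $2L_2$. In particular, the all-ones block cannot arise: it would require $P$ to have ones on both diagonals of the $2\times 2$ block, forcing two ones of $P$ into a common row or column. (The proof of Theorem~\ref{th:centroext} lists only $O_2, I_2, L_2$; the present lemma requires the additional types $2I_2, 2L_2$, which occur precisely when $P$ and $P^{\pi}$ share some $1$'s.)

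Next I would define the $m\times m$ nonnegative integer matrix $X=[x_{ij}]$ by letting $x_{ij}$ be the common row sum of the block $B_{ij}$. Then $X$ has all row and column sums equal to $2$, so by Birkhoff's theorem $X=R_1+R_2$ for two $m\times m$ permutation matrices $R_1, R_2$. I would then lift each $R_k$ to an $n\times n$ centrosymmetric permutation matrix $Q_k$ block by block: at each $(i,j)$ where $R_k$ has a $1$, assign $Q_k$'s $(i,j)$-block to be the unique $2\times 2$ centrosymmetric permutation matrix contained in $B_{ij}$, namely $I_2$ when $B_{ij}\in\{I_2, 2I_2\}$ and $L_2$ when $B_{ij}\in\{L_2, 2L_2\}$; and assign the zero block elsewhere.

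A block-by-block check then shows that $Q_1+Q_2=B$, that each $Q_k$ is centrosymmetric (being an array of centrosymmetric blocks), and that each $Q_k$ is an $n\times n$ permutation matrix (since $R_k$ is one and each $I_2$ or $L_2$ block contributes exactly one $1$ to each of its rows and columns). The main obstacle is the block enumeration step, and in particular ruling out the all-ones block; once that case analysis is done, the lifting is forced and the verification is routine.
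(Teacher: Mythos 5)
Your proof is correct and is essentially the paper's own argument: the reduced $m\times m$ matrix $X$ you build is exactly the adjacency matrix of the multidigraph on the pairs $\{i,n+1-i\}$ that the paper uses, and the Birkhoff decomposition $X=R_1+R_2$ followed by the block lifting is the same step the paper phrases as partitioning the edges into two spanning collections of cycles. Your explicit ruling out of the all-ones block and the block-by-block verification of the lift are details the paper leaves implicit, but the route is the same.
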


\begin{proof}
If $P$ is centrosymmetric, then $P^{\pi}=P$ and we may take $Q_1=P$ and $Q_2=P^{\pi}=P$.

Now assume that $P$ is not centrosymmetric.  
  We partition the row and column indices $\{1,2,\ldots,n\}$ into the $n/2$ pairs          
$\{i,n+1-i\}$ for $1\le i\le n/2$. Each of the $2\times 2$ submatrices $(P+P^{\pi})[\{i,n+1-i\};\{j,n+1-j\}]$ of $P+P^{\pi}$, determined by rows $i$ and $n+1-i$ and columns $j$ and $n+1-j$ equals one of
\begin{equation}\label{eq:a}
b\left[\begin{array}{cc} 1&0\\ 0&1\end{array}\right] \mbox{ and }
b\left[\begin{array}{cc} 0&1\\ 1&0\end{array}\right],\end{equation}
where $b$ is 0,1, or 2. Now consider the multidigraph $\Gamma (P+P^{\pi})$ whose $n/2$ vertices are the pairs
$\{k,n+1-k\}$ for $1\le k\le n/2$ with $b$ arcs from $\{i,n+1-i\}$ to $\{j,n+1-j\}$
if $(P+P^{\pi})[\{i,n+1-i\};\{j,n+1-j\}]$ is as given in (\ref{eq:a}) (thus no arcs if $b=0$, one arc if $b=1$, and two arcs if $b=2$). The indegree and outdegree of each vertex of $\Gamma(P+P^{\pi})$ equals 2, and thus the $(n/2)\times (n/2)$ adjacency matrix $A$ of this multidigraph  is a $(0,1,2)$-matrix with all row and column sums equal to 2. Therefore $A$ is the sum of two permutation matrices, and hence the edges of $\Gamma(P+P^{\pi})$ can be partitioned into two  spanning collection of cycles. Each such spanning collection of cycles corresponds to  a centrosymmetric permutation and this gives  $Q_1$ and $Q_2$ with $P+P^{\pi}=Q_1+Q_2$.
\end{proof}

Let $Q=[q_{ij}]$ be an $n\times n$ centrosymmetric permutation matrix, and consider a permutation cycle of $Q$ of length $k$, that is, a cycle 
$ \gamma: i_1\rightarrow i_2\rightarrow\cdots\rightarrow  i_k\rightarrow i_1$
in the digraph $\Gamma(Q)$ of $Q$. (It is convenient to identify a permutation cycle with the corresponding digraph cycle.)  Since $Q$ is centrosymmetric,  then $\gamma^{\pi}: n+1-i_1\rightarrow n+1-i_2\rightarrow \cdots\rightarrow n+1-i_k\rightarrow n+1-i_1$ is also a cycle  in the digraph $\Gamma(Q)$. We say that $\gamma$ is a {\it centrosymmetric cycle of length $k$} of $Q$ provided $\gamma^{\pi}=\gamma$.
Thus a permutation cycle of $Q$ is {\it centrosymmetric} if it is invariant under a rotation of 180 degrees. If $\gamma$ and $\gamma^{\pi}$ are disjoint, that is, $\{i_1,i_2,\ldots,i_k\}\cap\{n+1-i_1,n+1-i_2,\ldots,n+1-i_k\}=\emptyset$, then $\{\gamma,\gamma^{\pi}\}$ is called a {\it centrosymmetric pair of cycles of length $2k$}. 
A centrosymmetric cycle has even length and a centrosymmetric pair of cycles has length equal to a multiple of four. The edges of the digraph $\Gamma(Q)$ of a  centrosymmetric permutation matrix $Q$ are partitioned into centrosymmetric cycles and centrosymmetric pairs of cycles.

We first derive the following property of the extreme points of $\Omega_n^{t\& h}$ when $n$ is even. 

\begin{lemma}\label{lem:new}
Let $n$ be an even integer. Then every extreme point of $\Omega_n^{t\& h}$
is of the form
\[\frac{1}{2}(Q+Q^t)\]
where $Q$ is an $n\times n$ centrosymmetric permutation matrix.
\end{lemma}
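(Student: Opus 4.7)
The plan is to combine Lemma \ref{lem:genextremepoints} with Lemma \ref{lem:evencentroperm} and then use the defining property of an extreme point (it cannot be the midpoint of two distinct points in the polytope).

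First, I would apply Lemma \ref{lem:genextremepoints} to write an arbitrary extreme point $A\in\Omega_n^{t\&h}$ as
\[A=\tfrac{1}{4}(P+P^t+P^h+P^{\pi})=\tfrac{1}{2}(R+R^t),\]
where $P$ is some $n\times n$ permutation matrix and $R=\tfrac{1}{2}(P+P^{\pi})$ is centrosymmetric. Since $n$ is even, Lemma \ref{lem:evencentroperm} gives centrosymmetric permutation matrices $Q_1,Q_2$ with $P+P^{\pi}=Q_1+Q_2$, and hence $R=\tfrac{1}{2}(Q_1+Q_2)$.

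Substituting this decomposition of $R$ into $A=\tfrac{1}{2}(R+R^t)$, I obtain
\[A=\tfrac{1}{2}\bigl(\tfrac{1}{2}(Q_1+Q_1^t)+\tfrac{1}{2}(Q_2+Q_2^t)\bigr)=\tfrac{1}{2}(A_1+A_2),\]
where $A_i=\tfrac{1}{2}(Q_i+Q_i^t)$ for $i=1,2$. The next step is to verify that each $A_i$ lies in $\Omega_n^{t\&h}$: $A_i$ is clearly doubly stochastic and symmetric; and since $Q_i$ is centrosymmetric we have $Q_i^h=(Q_i^{\pi})^t=Q_i^t$ and $(Q_i^t)^h=Q_i$, so $A_i^h=\tfrac{1}{2}(Q_i^t+Q_i)=A_i$, showing $A_i$ is also Hankel-symmetric.

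Finally, since $A$ is an extreme point of $\Omega_n^{t\&h}$ and has been expressed as the midpoint of two points of $\Omega_n^{t\&h}$, we must have $A=A_1=A_2$, so $A=\tfrac{1}{2}(Q_1+Q_1^t)$, giving the required form with $Q=Q_1$. I do not anticipate a serious obstacle here: Lemma \ref{lem:evencentroperm} has already handled the only nontrivial step (producing centrosymmetric permutation matrices whose sum equals $P+P^{\pi}$), and the remainder is bookkeeping using the identities in item (vii) of the list of elementary properties of $(\cdot)^t$, $(\cdot)^h$, and $(\cdot)^{\pi}$.
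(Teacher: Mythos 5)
Your proof is correct and follows essentially the same route as the paper: reduce via Lemma \ref{lem:genextremepoints} to $\frac{1}{4}\bigl((P+P^{\pi})+(P+P^{\pi})^t\bigr)$, split $P+P^{\pi}=Q_1+Q_2$ using Lemma \ref{lem:evencentroperm}, check that each $\frac{1}{2}(Q_i+Q_i^t)$ lies in $\Omega_n^{t\&h}$, and conclude by extremality that the two summands coincide. (One small slip: the identities you invoke for Hankel symmetry are items (ii)--(iii) of the list of elementary properties, not item (vii).)
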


\begin{proof} It follows from Lemma \ref{eq:genextremepoints} that every extreme point $A$ of $\Omega_n^{t\& h}$ is of the form
$A=\frac{1}{4}\left((P+P^{\pi})+(P+P^{\pi})^t\right)$ for some permutation matrix $P$. By Lemma \ref{lem:evencentroperm}, $P+P^{\pi}=Q_1+Q_2$ where $Q_1$ and $Q_2$ are centrosymmetric permutation matrices. Thus
\begin{equation}\label{eq:OK} A=\frac{1}{4}((Q_1+Q_2)+(Q_1+Q_2)^t)=\frac{1}{4}((Q_1+Q_1^t)+(Q_2+Q_2^t))\end{equation}
where $Q_1+Q_1^t$ and $Q_2+Q_2^t$ are symmetric and centrosymmetric, and thus Hankel symmetric as well. Hence if $Q_1+Q_1^t\ne Q_2+Q_2^t$, then $A$ is not an extreme point of $\Omega_n^{t\& h}$. 
If $Q_1+Q_1^t=Q_2+Q_2^t$, then by (\ref{eq:OK}), 
\[A=\frac{1}{4}(2(Q_1+Q_1^t))=\frac{1}{2}(Q_1+Q_1^t)\]
where $Q_1$ is a centrosymmetric permutation matrix.
\end{proof}

\begin{lemma}\label{lem:neven1} Let $n$ be an even integer.
Let $Q$ be an $n\times n$ centrosymmetric permutation matrix with a centrosymmetric cycle of length equal to a multiple of four. Then
$\frac{1}{2}(Q+Q^t)$  is not an extreme point of $\Omega_n^{t\& h}$.
\end{lemma}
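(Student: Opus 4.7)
The plan is to write $\tfrac{1}{2}(Q+Q^{t})$ as the midpoint of two distinct elements of $\Omega_{n}^{t\&h}$ by modifying only the block indexed by the centrosymmetric cycle $\gamma$. Let $\gamma\colon i_{1}\to i_{2}\to\cdots\to i_{k}\to i_{1}$ be the given centrosymmetric cycle of length $k=4\ell$, and set $V=\{i_{1},\ldots,i_{k}\}$. Since $\gamma^{\pi}=\gamma$, the set $V$ is invariant under the rotation $i\mapsto n+1-i$; because $n$ is even, this rotation has no fixed points on $V$, and reading it off the cyclic parametrization of $\gamma$ forces it to act as the shift $i_{j}\mapsto i_{j+k/2}=i_{j+2\ell}$ (indices mod $k$).

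Next consider the two perfect matchings of the underlying undirected $k$-cycle,
$M_{1}=\{\{i_{2j-1},i_{2j}\}:1\le j\le 2\ell\}$ and $M_{2}=\{\{i_{2j},i_{2j+1}\}:1\le j\le 2\ell\}$. Because $2\ell$ is even, the shift $j\mapsto j+2\ell$ preserves the parity of $j$, so each $M_{r}$ is rotation-invariant. Let $S_{r}$ be the $n\times n$ $(0,1)$-matrix supported on $V\times V$ whose 1's lie at the endpoints of the edges of $M_{r}$; then $S_{r}$ is a symmetric and centrosymmetric involution on $V$, and therefore Hankel-symmetric as well.

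Now define $B_{r}$ to equal $S_{r}$ on $V\times V$ and $\tfrac{1}{2}(Q+Q^{t})$ off $V\times V$. Since the cycle $\gamma$ maps $V$ bijectively to $V$ inside $Q$, the entries of $Q$ (and hence of $Q+Q^{t}$) vanish on $V\times\overline{V}$ and on $\overline{V}\times V$, so $B_{r}$ is doubly stochastic. Invariance of $V$ under rotation makes $V\times V$ closed under both transpose and Hankel transpose, and since each of $S_{r}$ and $\tfrac{1}{2}(Q+Q^{t})$ is individually symmetric, centrosymmetric, and Hankel-symmetric on its respective block, $B_{r}\in\Omega_{n}^{t\&h}$.

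Finally, on $V\times V$ one has $S_{1}+S_{2}$ equal to the adjacency matrix of the undirected $k$-cycle, which is precisely $(Q+Q^{t})|_{V\times V}$, and off $V\times V$ the two $B_{r}$ already coincide with $\tfrac{1}{2}(Q+Q^{t})$; hence $\tfrac{1}{2}(B_{1}+B_{2})=\tfrac{1}{2}(Q+Q^{t})$. Since $M_{1}\neq M_{2}$ on a cycle of length $\ge 4$, we have $B_{1}\neq B_{2}$, so $\tfrac{1}{2}(Q+Q^{t})$ is not an extreme point of $\Omega_{n}^{t\&h}$. The key step---and the one where the hypothesis $4\mid k$ is genuinely used---is the parity argument: divisibility of $k$ by $4$ is exactly what makes the rotation preserve each alternating matching; if $k\equiv 2\pmod 4$ the shift would instead swap $M_{1}$ and $M_{2}$, the two candidates $B_{1},B_{2}$ would coincide, and the construction would collapse.
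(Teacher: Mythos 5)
Your proof is correct and follows essentially the same route as the paper's: both decompose the undirected cycle $Q+Q^{t}$ (on the vertices of $\gamma$) into its two alternating perfect matchings $P_1\ne P_2$, each a symmetric and centrosymmetric (hence Hankel-symmetric) involution, giving $\tfrac12(Q+Q^{t})=\tfrac12(P_1+P_2)$. The only difference is presentational --- the paper reduces to the case where $Q$ \emph{is} the cycle while you patch the matchings into the fixed block $V\times V$ --- and you make explicit the parity argument (the rotation acts as the shift by $k/2=2\ell$, which preserves each matching exactly when $4\mid k$) that the paper leaves implicit.
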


\begin{proof} 
It suffices to assume that $Q$ is itself a centrosymmetric cycle of length $n$, a multiple of 4,   with corresponding cycle $i_1\rightarrow i_2\rightarrow\cdots\rightarrow i_n\rightarrow i_1$. Then $Q+Q^t=P_1+P_2$ where $P_1\ne  P_2$ are  symmetric, centrosymmetric (and thus Hankel symmetric) permutation matrices corresponding, respectively, to the permutations 
\[i_1\leftrightarrow i_2, i_3\leftrightarrow i_4,\ldots, i_{n-1}\leftrightarrow i_{n}
\mbox{ and } i_2\leftrightarrow i_3, i_4\leftrightarrow i_5,\ldots,i_{n}\leftrightarrow i_1.\]
Thus $\frac{1}{2}(Q+Q^t)=\frac{1}{2}(P_1+P_2)$. Since $P_1$ and $P_2$ are symmetric and Hankel symmetric, $\frac{1}{2}(Q+Q^t)$ is not an extreme point of $\Omega_n^{t\& h}$.
\end{proof}

\begin{lemma}\label{lem:neven2} Let $n$ be an even integer.
Let $Q$ be an $n\times n$ centrosymmetric permutation matrix with a centrosymmetric pair of cycles of length  equal to a multiple of four. Then
$\frac{1}{2}(Q+Q^t)$  is not an extreme point of $\Omega_n^{t\& h}$.
\end{lemma}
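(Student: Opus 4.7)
The plan is to imitate the proof of Lemma \ref{lem:neven1}: decompose $Q+Q^t$ as $P_1+P_2$ where $P_1\ne P_2$ are both symmetric and centrosymmetric (hence Hankel-symmetric, by the elementary fact that any two of the three properties forces the third) permutation matrices. As in Lemma \ref{lem:neven1}, it suffices to assume $Q$ consists solely of the given pair $\{\gamma,\gamma^{\pi}\}$; on any other cycles of $Q$, we can take $P_1$ and $P_2$ to agree with $\tfrac{1}{2}(Q+Q^t)$ so that the entire difference $P_1-P_2$ lives on the pair. Write $\gamma$ as $i_1\to i_2\to\cdots\to i_k\to i_1$ and $\gamma^{\pi}$ as $i'_1\to i'_2\to\cdots\to i'_k\to i'_1$ with $i'_j:=n{+}1{-}i_j$. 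The hypothesis that the pair has length $2k$ a multiple of $4$ forces $k$ even, which is essential.

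Restricted to the $2k$ indices of the pair, $Q+Q^t$ is a disjoint union of two undirected $k$-cycles, one on $\{i_j\}$ and one on $\{i'_j\}$. Each even cycle admits exactly two perfect matchings. Take $P_1$ to be the involution
\[i_1\leftrightarrow i_2,\ i_3\leftrightarrow i_4,\ \ldots,\ i_{k-1}\leftrightarrow i_k\]
on the $i$-side, together with its $180$-degree rotation
\[i'_1\leftrightarrow i'_2,\ i'_3\leftrightarrow i'_4,\ \ldots,\ i'_{k-1}\leftrightarrow i'_k\]
on the $i'$-side; and take $P_2$ to be the shifted involution $i_2\leftrightarrow i_3,\ i_4\leftrightarrow i_5,\ \ldots,\ i_k\leftrightarrow i_1$ together with its rotation on the $i'$-side.

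Each of $P_1, P_2$ is a product of disjoint transpositions, hence symmetric. The crucial check is centrosymmetry: the rotation $(a,b)\mapsto(n{+}1{-}a,n{+}1{-}b)$ carries each $\{i_a,i_b\}$-transposition of $P_1$ to the $\{i'_a,i'_b\}$-transposition, which lies in $P_1$ by construction, and vice versa; the same argument works for $P_2$. This is exactly where $k$ even is needed: it makes the ``every other edge'' matching on each $k$-cycle well-defined, so that the $i$-side and $i'$-side matchings in $P_j$ correspond under rotation. Combined with symmetry, centrosymmetry gives Hankel-symmetry, so $P_1, P_2 \in \mathcal{P}_n^{t\&h}$. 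Since $k\ge 4$, the two matchings on a $k$-cycle are distinct, so $P_1\ne P_2$; and $P_1+P_2=Q+Q^t$ by construction (the two matchings exhaust the edges of each undirected cycle).

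Therefore $\tfrac{1}{2}(Q+Q^t)=\tfrac{1}{2}(P_1+P_2)$ is a nontrivial convex combination of two distinct elements of $\Omega_n^{t\&h}$, so $\tfrac{1}{2}(Q+Q^t)$ is not an extreme point. The main (and essentially only) thing to verify carefully is the centrosymmetry of $P_1$ and $P_2$; this is the whole role of the ``multiple of $4$'' hypothesis.
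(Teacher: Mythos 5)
Your proof is correct and essentially identical to the paper's: both split each of the two even cycles underlying $Q+Q^t$ into its two alternating perfect matchings and pair each matching with its $180$-degree rotation on the other cycle, yielding symmetric, centrosymmetric (hence Hankel-symmetric) permutation matrices $P_1\ne P_2$ with $P_1+P_2=Q+Q^t$. The one loose end --- your unjustified assertion that $k\ge 4$, which is what guarantees $P_1\ne P_2$ --- is shared with the paper, which likewise tacitly excludes the degenerate case where each cycle of the pair is a single transposition (there the two matchings coincide and $\frac{1}{2}(Q+Q^t)$ restricted to the pair is itself a permutation matrix).
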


\begin{proof} The proof is similar to that of Lemma \ref{lem:neven1}. 
It suffices to assume that $Q$ is itself a pair of centrosymmetric cycles  of length $n=4k$,   corresponding to the disjoint pair of permutations \[i_1\rightarrow i_2\rightarrow\cdots\rightarrow i_{2k}\mbox{ and }
n+1-i_{1}\rightarrow n+1-i_{2}\rightarrow\cdots\rightarrow n+1-i_{2k}.\]
Then $Q+Q^t=P_1+P_2$ where $P_1$ and $P_2$ are  symmetric, centrosymmetric (and thus Hankel symmetric)  permutation matrices corresponding, respectively, to the  permutations
\[i_1\leftrightarrow i_2, \ldots,i_{2k-1}\leftrightarrow i_{2k}, n+1-i_1\leftrightarrow
n+1-i_2,\ldots,n+1-i_{2k-1}\leftrightarrow n+1-i_{2k}\]
and
\[i_2\leftrightarrow i_3,\ldots,i_{2k}\leftrightarrow i_1, n+1-i_2\leftrightarrow n+1-
i_{3}, n+1-i_{2k}\leftrightarrow n+1-i_1.\]
As above, $\frac{1}{2}(Q+Q^t)=\frac{1}{2}(P_1+P_2)$. Since $P_1$ and $P_2$ are symmetric and Hankel symmetric, $\frac{1}{2}(Q+Q^t)$ is not an extreme point of $\Omega_n^{t\& h}$.
\end{proof}

\begin{theorem}\label{th:evenextreme}
Let $n$ be an even integer. Then the extreme points of $\Omega_n^{t\& h}$ are the matrices of the form
$\frac{1}{2}(Q+Q^t)$ where $Q$ is a centrosymmetric 
permutation matrix not having any centrosymmetric cycles of length equal to a multiple of four nor centrosymmetric pairs of cycles  of length equal to a multiple of four.
\end{theorem}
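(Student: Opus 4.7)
My plan has three main steps. First, by Lemma~\ref{lem:new} every extreme point of $\Omega_n^{t\& h}$ has the form $\frac{1}{2}(Q+Q^t)$ for a centrosymmetric permutation matrix $Q$, and Lemmas~\ref{lem:neven1} and~\ref{lem:neven2} rule out those $Q$ that contain a centrosymmetric cycle of length divisible by~$4$ or a centrosymmetric pair of cycles of (total) length divisible by~$4$. Hence the ``only if'' direction of the theorem is immediate, and the remaining task is the converse: if $Q$ has no such forbidden cycle or pair, then $A=\frac{1}{2}(Q+Q^t)$ is an extreme point.

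I would prove this by establishing that any $B\in\Omega_n^{t\& h}$ whose support is contained in the support of $A$ must satisfy $B=A$. Since $B$ is doubly stochastic and zero off the support of $A$, each row of $B$ restricted to one connected component of the graph $G(A)$ has sum $1$. Under our hypothesis on $Q$, each connected component of $G(A)$ corresponds to one of the following allowed cycle types of $Q$: a centrosymmetric pair of fixed points $(i),(n+1-i)$ (two loops), a centrosymmetric $2$-cycle $\{i,n+1-i\}$, a centrosymmetric cycle of length $k\equiv 2\pmod{4}$, or a centrosymmetric pair of cycles of odd length $k\ge 1$.

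I would then handle each case. Fixed points and centrosymmetric $2$-cycles force the relevant entries of $B$ to be $1$ immediately from the row-sum constraint. For a longer cycle $i_1\to i_2\to\cdots\to i_k\to i_1$ of $Q$, denote the edge values of $B$ by $b_j=B_{i_j,i_{j+1}}=B_{i_{j+1},i_j}$ (indices mod $k$). Row $i_j$ of $B$ contains exactly $b_{j-1}$ and $b_j$, so $b_{j-1}+b_j=1$ for every $j$, forcing $b_j$ to depend only on the parity of $j$. For a $\pi$-invariant directed $k$-cycle, $\pi$ must act as rotation by exactly $k/2$ (so $k$ is even), and centrosymmetry of $B$ then gives $b_j=b_{j+k/2}$; since $k\equiv 2\pmod{4}$ the shift $k/2$ is odd, so this identifies entries of opposite parity, collapsing everything to $b_j=\tfrac{1}{2}$. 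For a $\pi$-pair of cycles of odd length $k$, the alternation $b_{j-1}+b_j=1$ around one odd cycle already forces $b_j=\tfrac{1}{2}$, and $\pi$ transfers these values to the partner cycle. In every case $B$ agrees with $A=\frac{1}{2}(Q+Q^t)$ on the component.

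The main obstacle is the centrosymmetric cycle analysis: one must verify that $\pi$ restricted to a $\pi$-invariant directed $k$-cycle is forced to be rotation by $k/2$, and then use the parity argument to check that alternation combined with the centrosymmetric identification $b_j=b_{j+k/2}$ collapses the one-parameter family of symmetric doubly stochastic matrices supported on an even cycle to a single point precisely when $k\equiv 2\pmod{4}$. This dichotomy mirrors, on the converse side, the constructions behind Lemmas~\ref{lem:neven1}--\ref{lem:neven2}.
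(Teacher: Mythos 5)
Your proposal is correct and follows essentially the same route as the paper: Lemma~\ref{lem:new} plus Lemmas~\ref{lem:neven1} and~\ref{lem:neven2} give one direction, and the converse is proved by showing the support of $\frac{1}{2}(Q+Q^t)$ determines a unique matrix in $\Omega_n^{t\& h}$, working cycle-component by cycle-component. Your parity argument (alternation $b_{j-1}+b_j=1$ combined with the centrosymmetric identification $b_j=b_{j+k/2}$, with $k/2$ odd forcing $b_j=\tfrac12$) is exactly the detail the paper's proof asserts but leaves implicit in step (i).
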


\begin{proof}
Because of Lemmas \ref{lem:neven1} and \ref{lem:neven2}, we have only to show that if $Q$ is an $n\times n$  centrosymmetric permutation matrix  without any centrosymmetric cycles of length a multiple of four and centrosymmetric pairs of cycles of length a multiple of four, then $\frac{1}{2}(Q+Q^t)$  is an extreme point of $\Omega_n^{t\& h}$.
Let $Q$ be such a  centrosymmetric permutation matrix. 
That $\frac{1}{2}(Q+Q^t)$ is an extreme point of $\Omega_n^{t\& h}$ will follow by verifying that the only symmetric, Hankel-symmetric doubly stochastic matrix having zeros in all positions that $Q+Q^t$ has zeros
is $\frac{1}{2}(Q+Q^t)$. For this it suffices to show that this holds when our $Q$ is itself (i) a centrosymmetric  cycle $\gamma$ or (ii) a centrosymmetric pair of cycles $\{\gamma,\gamma^{\pi}\}$.

\smallskip
(i) In this case, for some integer $k$,  $Q$ is  a $(4k+2)\times (4k+2)$ centrosymmetric permutation matrix corresponding to a permutation of $\sigma$ of $\{1,2,\ldots,4k+2\}$ that is a cycle of  length $4k+2$.
 Then $Q^t$ is also such a centrosymmetric  permutation matrix (indeed, $(Q^t)^{\pi} =(Q^{\pi})^t=Q^t$), and  $(Q+Q^t)$ is a symmetric and Hankel symmetric $(0,1)$-matrix  with two 1's in each row and column,  whose associated digraph $\Gamma(Q+Q^t)$ consists of a cycle $\rho$ of length $4k+2$ and its reverse cycle in the other direction.  The symmetry, centrosymmetry, and the fact that the cycle $\gamma$ has length equal to 2 mod 4 now implies that $A$ has all its nonzero entries equal to $1/2$ and hence $A=\frac{1}{2}(Q+Q^t)$.

\smallskip
(ii) A similar argument works if we have a centrosymmetric pair of cycles of length equal to 2 mod 4, each cycle having length equal to 1 mod 2.
\end{proof}

Referring to Theorem \ref{th:evenextreme} we note that if $Q$ is a centrosymmetric permutation matrix, then $(Q+Q^t)^h=Q^h+Q^{\pi}=Q+Q^h$. Thus in Theorem \ref{th:evenextreme} we could replace $Q+Q^t$ by $Q+Q^{\pi}$.
As a final remark in the even case, we note that the characterization of the extreme points of $\Omega_n^{t\& h}$ includes the permutation matrices that are symmetric and Hankel-symmetric. This is because a symmetric permutation matrix can have cycles only of lengths 1 and 2.

We now turn to the investigation of the extreme points of $\Omega_n^{t\& h}$ when $n$ is odd. We begin with two examples which are useful for undertanding our general conclusions.

\begin{example}{\rm \label{ex:oddext1}
Let $n=11$ and consider the matrix  in ${\Omega}_9^{t\&h}$
given by
\[A=\frac{1}{4}\left[\begin{array}{c|c|c|c|c||c||c|c|c|c|c}
&3&&&1&&&&&&\\ \hline
3&&1&&&&&&&&\\ \hline
&1&&3&&&&&&&\\ \hline
&&3&&1&&&&&&\\ \hline
1&&&1&&2&&&&&\\ \hline\hline
&&&&2&&2&&&&\\ \hline\hline
&&&&&2&&1&&&1\\ \hline
&&&&&&1&&3&&\\ \hline
&&&&&&&3&&1&\\ \hline
&&&&&&&&1&&3\\ \hline
&&&&&&1&&&3&\end{array}\right]\]
whose  loopy graph $G(A)$ consists of two cycles of length 5 with a path of length 2 joining them and containing the central vertex 5 in its  middle.
It is straightforward to check that the only  matrix in ${\Omega}_{11}^{t\&h}$ which has a zero entry wherever $A$ has a zero entry (that is, whose graph is contained in the graph of $A$) is equal to  $A$. This implies that $A$ is an extreme point of ${\Omega}_{11}^{t\&h}$. This example extends to any odd $n\ge 5$ when the graph consists of two 
odd cycles of the same length and an even path joining them containing the central vertex $(n+1)/2$ corresponding to row and column $(n+1)/2$ in the middle.

Now let $n=9$ and consider the matrix in ${\Omega}_9^{t\&h}$
given by
\[A_1=\frac{1}{4}
\left[\begin{array}{c|c|c|c||c||c|c|c|c}
&&&&&&1&3&\\ \hline
&&&&&&1&&3\\ \hline
&&&&&2&&1&1\\  \hline
&&&&2&&2&&\\ \hline\hline
&&&2&&2&&&\\ \hline\hline
&&2&&2&&&&\\ \hline
1&1&&2&&&&\\ \hline
3&&1&&&&&&\\ \hline
&3&1&&&&&&\end{array}\right]\]
whose loopy graph $G(A_1)$ is a cycle of length $6$ bisected by a path of length 4 containing the central vertex 5 in its middle. 
Again, the only  matrix in ${\Omega}_9^{t\&h}$ which has a zero entry wherever $A_1$ has a zero entry  is equal to  $A_1$ and hence  $A_1$ is an extreme point of ${\Omega}_9^{t\&h}$. 
This example extends to any odd $n\ge 5$ when the graph consists of an even cycle bisected by a path of even length containing the central vertex $(n+1)/2$ in its middle. We note that if we r

Now let $n=7$ and consider the matrix in ${\Omega}_7^{t\&h}$
given by
\[A_2=\frac{1}{2}\left[\begin{array}{c|c|c||c||c|c|c}
&1&&&&&1\\ \hline
1&&1&&&&\\ \hline
&1&&1&&&\\ \hline\hline
&&1&&1&&\\ \hline\hline
&&&1&&1&\\ \hline
&&&&1&&1\\ \hline
1&&&&&1&\end{array}\right]\]
whose graph $G(A_2)$ is a cycle of length 7.
Again it is straightforward to check that any matrix in ${\Omega}_7^{t\&h}$ which has a zero entry wherever $A_2$ has a zero entry is equal to  $A_2$,  and hence $A_2$ is an extreme point of ${\Omega}_7^{t\&h}$. This example extends to any odd  $n\ge 3$ when the graph is  a cycle. Note that if the columns of $A_2$ are reversed in order, the corresponding loopy graph is of the type of $G(A_1)$.
As we shall see, these matrices $A_1$ and $A_2$ and their loopy graphs are the key for the classification of the extreme points of ${\Omega}_n^{t\&h}$ when $n$ is odd.
}\hfill{$\Box$}
\end{example}

\begin{lemma}\label{lem:even_to_odd}
Let $n=2m+1$ be an odd integer and let $A=[a_{ij}]$ be an extreme point of $\Omega_n^{t\& h}$ with $a_{m+1,m+1}=0$. Then 
one of the following holds:
\begin{itemize}
\item[\rm (a)] there exists an integer $k$ with $1\le k\le m$ such that \[a_{k,m+1}=a_{m+1,k}=a_{m+1,2m+2-k}=a_{2m+2-k,m+1}=\frac{1}{2},\]
\item[\rm (b)]  there exist integers $k$ and $l$  with $1\le k<l\le m$ such that 
\[a_{k,m+1}=a_{m+1,k}=a_{m+1,2m+2-k}=a_{2m+2-k,m+1}=\frac{1}{4}\] \mbox{ and }
\[
a_{l,m+1}=a_{m+1,l}=a_{m+1,2m+2-l}=a_{2m+2-l,m+1}=\frac{1}{4}.\]
\end{itemize}
In both instances, all other entries in row $m+1$ and column $m+1$ equal zero.
\end{lemma}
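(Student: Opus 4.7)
The plan is to pin down the middle row (and hence the middle column) of $A$ entry by entry, using only the symmetries of $A$ together with the quantization coming from Corollary \ref{cor:quarter}.

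First I would combine the three invariances: $A^t = A$ gives $a_{m+1,j}=a_{j,m+1}$, so the whole of column $m+1$ is determined by row $m+1$, while $A^\pi = A$ gives $a_{m+1,j} = a_{m+1,2m+2-j}$, so row $m+1$ is a palindrome about its central entry. (Hankel-symmetry then comes for free, as usual.) Because the diagonal entry $a_{m+1,m+1}$ is assumed to be zero, the nonzero entries of row $m+1$ therefore come in equal pairs $(a_{m+1,j},a_{m+1,2m+2-j})$ with $j\in\{1,\dots,m\}$.

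Next I would apply Corollary \ref{cor:quarter}: every entry of $A$ lies in $\{0,\tfrac14,\tfrac12,\tfrac34,1\}$. Let
\[T = \{\,j\in\{1,\dots,m\} : a_{m+1,j} > 0\,\}.\]
Since the sum of the entries of row $m+1$ is $1$ and these nonzero entries come in equal pairs, I get
\[2\sum_{j\in T} a_{m+1,j} \;=\; 1, \qquad\text{i.e.}\qquad \sum_{j\in T} a_{m+1,j} = \tfrac12.\]
Each summand is a positive number in $\{\tfrac14,\tfrac12,\tfrac34,1\}$; the values $\tfrac34$ and $1$ are immediately ruled out because they already exceed $\tfrac12$. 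So every $a_{m+1,j}$ with $j\in T$ equals $\tfrac14$ or $\tfrac12$.

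Finally I would read off the two cases: if some $a_{m+1,j}=\tfrac12$ then the single pair it sits in already accounts for the full row sum, giving $T=\{k\}$ with $a_{m+1,k}=\tfrac12$, which is case (a); otherwise every entry of $T$ equals $\tfrac14$, and matching the sum $\tfrac12$ forces $|T|=2$, say $T=\{k,l\}$ with $k<l$, which is case (b). Transferring these values to column $m+1$ by symmetry and to the reflected columns by centrosymmetry produces the four-entry patterns stated in (a) and (b), and all remaining entries of row and column $m+1$ are zero by construction. There is no real obstacle here: the proof is essentially a counting argument, and the only slightly delicate point is to remember that Corollary \ref{cor:quarter} is what prevents any third ``thinner'' pattern (such as three pairs of $\tfrac16$) from appearing.
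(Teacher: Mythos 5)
Your proof is correct and follows exactly the route the paper intends: the paper's own proof is the one-line remark that the lemma is an immediate consequence of Corollary \ref{cor:quarter} together with the symmetry and Hankel-symmetry of $A$, and your argument is simply the careful unpacking of that remark (palindromic middle row, pairing of equal entries summing to $\tfrac12$, and the quantization to $\{\tfrac14,\tfrac12\}$ forcing cases (a) and (b)). No gaps.
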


\begin{proof}
Since $a_{m+1,m+1}=0$, this is an immediate consequence of Corollary \ref{cor:quarter}  and the  symmetry and Hankel-symmetry of the doubly stochastic matrix $A$.
\end{proof}

The matrices in Example \ref{ex:oddext1} illustrate both possibilities in Lemma \ref{lem:even_to_odd}.

\begin{lemma}\label{lem:combine} Let $n=m+p$ be an integer where
$m$ is an odd integer and  $p=2q$ is an even integer. Let $B$ be an extreme point of $\Omega_m^{t\& h}$, and let 
\[C=\left[\begin{array}{c|c}
C_1&C_2\\ \hline
C_2^t&C_1^{h}\end{array}\right]\]
be an extreme point of $\Omega_p^{t\& h}$ where $C_i$ is $q\times q$ for $i=1$ and $2$. Then
\begin{equation}\label{eq:scattered}
A=\left[\begin{array}{ccc}
C_1&O_{qm}&C_2\\
O_{mq}&B&O_{mq}\\
C_2^t&O_{qm}&C_1^{h}\end{array}\right]\end{equation}
 is an extreme point of $\Omega_{n}^{t\& h}$. Conversely, if $A$ is an extreme point of $\Omega_{n}^{t\& h}$, then $C$ is an extreme point of $\Omega_p^{t\& h}$ and $B$ is an extreme point of $\Omega_m^{t\& h}$.
\end{lemma}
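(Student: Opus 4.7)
The plan is to exploit the rigid zero pattern of $A$: the separator blocks $O_{qm}$ and $O_{mq}$ in~(\ref{eq:scattered}) decouple the middle $m\times m$ block from the four corner blocks, forcing any convex decomposition of $A$ in $\Omega_n^{t\&h}$ to respect this splitting. First, I would verify that $A$ itself lies in $\Omega_n^{t\&h}$. Double stochasticity is immediate: the middle rows of $A$ coincide with the rows of $B$ padded by zeros, while the outer rows coincide with the rows of $C$ padded by zeros. Symmetry and Hankel-symmetry follow from a block-by-block check: $B$ handles the middle block by its own $t$- and $h$-symmetry, and the assumed $t$- and $h$-symmetry of $C$ (which amounts to $C_1$ being symmetric and $C_2$ being symmetric and Hankel-symmetric) handles the four corner blocks via the formulas expressing the transpose and Hankel transpose of a $3\times 3$ block matrix in terms of its individual blocks.

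For the forward implication, suppose $A=\tfrac12(A^{(1)}+A^{(2)})$ with $A^{(1)},A^{(2)}\in\Omega_n^{t\&h}$. Because entries are nonnegative and the separator blocks of $A$ are zero, each $A^{(i)}$ must also vanish there, so each $A^{(i)}$ has the block form of~(\ref{eq:scattered}) with some middle block $B^{(i)}$ and corner blocks assembling into a $p\times p$ matrix $C^{(i)}$. Row-sum considerations give $B^{(i)}\in\Omega_m$ and $C^{(i)}\in\Omega_p$, and the $t$- and $h$-symmetries of $A^{(i)}$ descend to those of $B^{(i)}$ and $C^{(i)}$ by the same block-transpose identities, placing $B^{(i)}\in\Omega_m^{t\&h}$ and $C^{(i)}\in\Omega_p^{t\&h}$. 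Then $B=\tfrac12(B^{(1)}+B^{(2)})$ and $C=\tfrac12(C^{(1)}+C^{(2)})$, so extremality of $B$ and $C$ forces $B^{(i)}=B$ and $C^{(i)}=C$, giving $A^{(1)}=A^{(2)}=A$.

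For the converse I would run this argument in reverse. Given a decomposition $B=\tfrac12(B^{(1)}+B^{(2)})$ in $\Omega_m^{t\&h}$, I insert $B^{(i)}$ in place of $B$ in~(\ref{eq:scattered}) while keeping the corner blocks fixed; both resulting matrices lie in $\Omega_n^{t\&h}$ and average to $A$, so extremality of $A$ forces $B^{(1)}=B^{(2)}=B$. An exactly parallel argument, now holding $B$ fixed and decomposing $C$ through its corner blocks, shows that $C$ is extreme. The one place where I anticipate having to be careful, and hence the main obstacle, is the descent of $h$-symmetry in the forward direction: one must track how the Hankel transpose of the $3\times 3$ block matrix in~(\ref{eq:scattered}) maps the corner blocks (sending the $(1,1)$ block to the $(3,3)$ position and fixing the $(1,3)$ and $(3,1)$ blocks, each with an internal Hankel transpose applied), so that $(A^{(i)})^h=A^{(i)}$ genuinely pins down the $h$-symmetry of $C^{(i)}$ rather than some weaker compatibility condition. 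Once that bookkeeping is carried out, the rest is routine.
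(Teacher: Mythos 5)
Your argument is correct and supplies exactly the direct verification that the paper omits (its ``proof'' consists of the single sentence that the conclusion is obvious): the zero separator blocks force any decomposition of $A$ in $\Omega_n^{t\&h}$ to restrict to decompositions of $B$ and of the assembled corner matrix $C$, and conversely. One small inaccuracy in your parenthetical: membership of $C$ in $\Omega_p^{t\&h}$ amounts to $C_1$ being symmetric and $C_2$ being Hankel-symmetric only --- symmetry of $C$ is carried entirely by $C_1=C_1^t$ and the relation between the off-diagonal blocks, and imposes no symmetry on $C_2$ itself (indeed $(C_2^t)^h=C_2^{\pi}=C_2^t$ already follows from $C_2^h=C_2$). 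Since your block-by-block check of $A^t=A$ and $A^h=A$ never actually uses symmetry of $C_2$, this slip is harmless.
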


\begin{proof} The conclusion of the lemma is obvious.
\end{proof}

In Lemma \ref{lem:combine}, let $m=2k+1$. Then in (\ref{eq:scattered}) defining $A$, the matrix $B$ and $C$ may be `intertwined', more precisely, with indices $1\le i_1<i_2<\cdots<i_{k}<i_{k+1}=(n+1)/2$, the principal $m\times m$  submatrix  $A[\{i_1,i_2,\ldots,i_k,i_{k+1}, n+1-i_k,\ldots,n+1-i_2,n+1-i_1\}]$  of $A$ satisfies
\[A[\{i_1,i_2,\ldots,i_k,i_{k+1}, n+1-i_k,\ldots,n+1-i_2,n+1-i_1\}]=B,\]
where $C$ is placed in $A$ as a principal submatrix in the obvious way to produce a matrix in $\Omega_{n}^{t\& h}$. The loopy graph G$(A)$  is disconnected and consists of two vertex-disjoint loopy graphs G$(C)$ and G$(B)$. Since the matrices $C$ and $B$ are extreme points of  $\Omega_m^{t\& h}$ and $\Omega_p^{t\& h}$, respectively, $A$ will be an extreme point of $\Omega_n^{t\& h}$.
Conversely, if $A$ is an arbitrary extreme point of $\Omega_n^{t\& h}$ whose loopy graph G$(A)$ is not connected, then after a simultaneous permutation of rows and columns resulting in a matrix in $\Omega_n^{t\& h}$, $A$ will assume the form given in (\ref{eq:scattered}). Moreover, since the extreme points of $\Omega_p^{t\& h}$ for $p$ even are characterized in Theorem \ref{th:evenextreme}, it now follows from Lemma \ref{lem:combine} and the   comments above that to characterize the extreme points $A$ of $\Omega_{n}^{t\& h}$ when $n$ is odd,   it suffices to characterize  them when the corresponding loopy graph $G(A)$ is connected.

\begin{theorem}\label{th:oddchar}
Let $n$ be an odd integer. Then the connected component containing vertex $(n+1)/2$ of the graphs of the extreme points of 
$\Omega_n^{t\& h}$ are one of the following three types:
\begin{itemize}
\item[\rm (a)] An odd cycle $($possibly a loop$)$ containing vertex $(n+1)/2$.
\item[\rm (b)] Two odd cycles of the same length and an even path
$($possibly of length zero$)$ 
containing the central vertex $(n+1)/2$ in its center.
\item[\rm (c)] An even cycle bisected by an even path of nonzero length containing the central vertex $(n+1)/2$ in its middle.
\end{itemize}
Moreover, any matrix in $\Omega_n^{t\& h}$ whose graph is connected and satisfies one of {\rm (a)}, {\rm (b)}, and {\rm (c)} is an extreme point of
$\Omega_m^{t\& h}$.
\end{theorem}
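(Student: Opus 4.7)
The plan is to treat both directions separately, first reducing to the connected case via Lemma \ref{lem:combine}: every extreme point whose graph is disconnected decomposes into an extreme point of some $\Omega_p^{t\&h}$ ($p$ even, handled by Theorem \ref{th:evenextreme}) together with an extreme point of some $\Omega_m^{t\&h}$ ($m$ odd) with connected graph, so it suffices to work with connected $G(A)$. Throughout I use Lemma \ref{lem:genextremepoints} to write $A = \frac{1}{4}(P + P^t + P^h + P^\pi)$ with entries in $\{0,\tfrac{1}{4},\tfrac{1}{2},\tfrac{3}{4},1\}$ by Corollary \ref{cor:quarter}.

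For the sufficiency (converse) direction, given $A \in \Omega_n^{t\&h}$ whose graph $G(A)$ is connected and of one of the types (a), (b), (c), I would verify that $A$ is uniquely determined by its support together with the symmetries and row/column sums. The argument is a propagation: the equalities $a_{ij} = a_{ji} = a_{n+1-j,\,n+1-i} = a_{n+1-i,\,n+1-j}$ collapse each generic entry into a single unknown (and pairs entries on each diagonal), and reading off the row sums along the cycle or path pins each unknown down. The three matrices in Example \ref{ex:oddext1} realize exactly the patterns that this propagation produces.

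For the necessity direction, let $A$ be an extreme point with $G(A)$ connected. The first step is to pin down the central row and column. Symmetry combined with Hankel symmetry gives $a_{m+1,j} = a_{m+1,n+1-j}$, so row $m+1$ is palindromic; the row-sum constraint together with the restricted entry values from Corollary \ref{cor:quarter} force $a_{m+1,m+1} \in \{0, 1/2, 1\}$. The case $a_{m+1,m+1}=1$ makes $(m+1)$ an isolated loop, so by connectedness $n=1$, yielding type (a). The case $a_{m+1,m+1}=1/2$ is ruled out by exhibiting an explicit $\Omega_n^{t\&h}$-perturbation (shifting mass between the loop and the symmetric pair of $1/4$-entries adjacent to the center), contradicting extremeness. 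When $a_{m+1,m+1}=0$, Lemma \ref{lem:even_to_odd} gives two subcases, and the four-$1/4$ subcase is ruled out by an analogous perturbation swapping flow between the two symmetric pairs of neighbors of $m+1$ while respecting all four of the symmetries and the column sums.

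This leaves subcase (i), where $a_{m+1,k} = a_{m+1,n+1-k} = 1/2$ for a unique $k$. I would then propagate outward from vertex $k$ (and, symmetrically, $n+1-k$) along the connected graph, using the row and column sum constraints together with Corollary \ref{cor:quarter} to pin down each successive entry as $1/2$, $1/4$, or $3/4$. The main obstacle, and most delicate part of the argument, is to show that this propagation must terminate in exactly one of the three combinatorial patterns: either the thread closes into a single odd cycle through $(n+1)/2$ (type (a)); or it meets its mirror image through $(n+1)/2$ to form an even cycle bisected by the central path (type (c)); or it reaches an odd cycle on one side, which by Hankel symmetry forces a mirror odd cycle on the other side joined by an even path through $(n+1)/2$ (type (b)). Ruling out any other closure amounts to checking that every other topological possibility (e.g.\ a second cycle of different parity, or a cycle whose Hankel-mirror overlaps it non-trivially off the center) would either violate the entry constraints in $\{0,\tfrac14,\tfrac12,\tfrac34,1\}$ or admit a convex perturbation analogous to the ones used above, contradicting extremeness.
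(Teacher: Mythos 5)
Your overall architecture matches the paper's: reduce to connected $G(A)$ via Lemma \ref{lem:combine}, use Lemma \ref{lem:genextremepoints} and Corollary \ref{cor:quarter} to restrict entries, analyze the central row via Lemma \ref{lem:even_to_odd}, and then trace paths outward from vertex $(n+1)/2$, using perturbations to kill configurations with even cycles. The sufficiency direction (unique determination by the support) is also the paper's argument. However, there is a concrete error in your necessity argument: you claim that the ``four-$\frac14$'' subcase of Lemma \ref{lem:even_to_odd} (case (b), where the central row has four entries equal to $\frac14$) can be ruled out by a perturbation. It cannot. That subcase is realized by genuine extreme points, namely type (b) configurations in which the even path has length zero, so that the two odd cycles share the central vertex, which the theorem explicitly permits (``possibly of length zero''). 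For instance, with $n=5$ the matrix
\[
\frac{1}{4}\left[\begin{array}{ccccc}
0&3&1&0&0\\
3&0&1&0&0\\
1&1&0&1&1\\
0&0&1&0&3\\
0&0&1&3&0\end{array}\right]
\]
is symmetric, Hankel symmetric, doubly stochastic, has central row $(\tfrac14,\tfrac14,0,\tfrac14,\tfrac14)$, and is uniquely determined in $\Omega_5^{t\&h}$ by its zero pattern (the row sums force $a_{12}=\tfrac34$), hence is an extreme point whose graph is two triangles sharing vertex $3$. Any symmetric, Hankel-symmetric perturbation $E$ with zero line sums and $\mathrm{supp}(E)\subseteq\mathrm{supp}(A)$ must vanish, so the perturbation you describe does not exist. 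By discarding this case your argument loses an entire family of extreme points and the classification would come out wrong.

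Two smaller remarks. First, the case $a_{m+1,m+1}=\tfrac12$ that you propose to eliminate by perturbation is in fact vacuous: by Lemma \ref{lem:genextremepoints} the central entry of $\frac14(P+P^t+P^h+P^{\pi})$ equals $p_{m+1,m+1}\in\{0,1\}$, so no perturbation is needed. Second, the step you flag as the ``main obstacle'' --- showing the outward propagation closes up in exactly one of the three patterns --- is precisely where the paper does its real work: it follows the mirror pair of paths $\pi_i$ and $\pi_{n+1-i}$ and distinguishes whether they first meet at a common edge (odd cycle through the center), each return to an earlier vertex of their own path (two cycles joined by an even central path, with even cycles excluded by an alternating $\pm\epsilon$ perturbation), or $\pi_i$ reaches $n+1-i$ (even cycle bisected by an even central path). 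You would need to carry out this trichotomy explicitly, and doing so correctly forces you to retain, not discard, the degree-four central vertex case.
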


\begin{proof}
Let $A$ be an extreme point of $\Omega_n^{t\& h}$. As already remarked, it  suffices to assume that the loopy graph G$(A)$ is connected. If G$(A)$ is a loop, then (a) holds. Now assume that G$(A)$ is not a loop. Then each vertex must have degree at least two since  
G$(A)$  is connected and the matrix is doubly stochastic.  Moreover, it follows from Lemma \ref{lem:genextremepoints} that the degree in G$(A)$ of the central vertex $(n+1)/2$ is either 2 or 4. The  Hankel symmetry of G$(A)$ implies that the edges containing the  vertex $(n+1)/2$ come in pairs:
$\{(n+1)/2,i\}$ and $\{(n+1)/2,n+1-i\}$ where $i<(n+1)/2$. Consider such a pair of edges and follow the two paths $\pi_i$ and $\pi_{n+1-i}$ that they originate from vertex $(n+1)/2$ whereby if 
$\{k,l\}$ is an edge of $\pi_1$, then $\{n+1-k,n+1-l\}$ is an edge of $\pi_{n+1-i}$. The following possibilities can occur:
\begin{itemize}
\item[\rm (a)] $\pi_i$ and $\pi_{n+1-i}$ first meet at a common edge,  thereby creating a  cycle $\gamma$  of odd length containing vertex $(n+1)/2$.
\item[\rm (b)] (a) does not occur, and $\pi_i$  returns to a previous vertex $k$ on its path thereby creating a cycle $\gamma_i$, and  so $\pi_{n+1-i}$
returns to the previous vertex $(n+1-k)$  on its path thereby creating a cycle $\gamma_{n+1-i}$. If these two cycles have even length, then by alternatingly adding and subtracting a small number $\epsilon$ to  the entries corresponding to the edges of the cycles, we obtain two
distinct matrices $B_1$ and $B_2$ in $\Omega_n^{t\&h}$ such that
$A=\frac{1}{2}(B_1+B_2)$ implying that $A$ is not an extreme point. Hence these two cycles have odd length and are joined by a path of even length  containing the central vertex $(n+1)/2$ in its center. If $k=(n+1)/2$, then this path has length equal to zero.
\item[\rm (c)] Before either (a) or (b) occurs,  $\pi_i$ contains the vertex $(n+1-i)$ and so $\pi_{n+1-i}$ contains the vertex $i$, thereby creating an even length  cycle,  with an even length path joining vertices $i$ and $(n+1-i)$ having the central vertex $(n+1)/2$ in its center. 
\end{itemize}

Note that since G$(A)$ is connected and each vertex has degree at least 2, one of the possibilities (a), (b), and (c) must occur.
If $G^*$ is the loopy graph determined by (a), (b), or (c), then as illustrated in Example \ref{ex:oddext1}, there is a matrix $C$ in $\Omega_n^{t\& h}$ whose loopy graph is $G$ and $C$ is an extreme point of $\Omega_n^{t\& h}$. Suppose that the loopy graph G$(A)$ does not equal $G^*$. Then $A$ has a nonzero entry in a position which is zero in $C$. For $\epsilon>0$ sufficiently small, we have
\[\frac{1}{1+\epsilon} (A+\epsilon C),\
\frac{1}{1-\epsilon} (A-\epsilon C)\in \Omega_n^{t\& h},\]
where $A\ne A\pm \epsilon C$ implying the contradiction that $A$ is not an extreme point. Hence the loopy graph of $A$ equals $G$, and one of (a), (b), and (c) holds.
\end{proof}

Theorem \ref{th:oddchar} and Lemma \ref{lem:combine}, along with the characterization in Theorem \ref{th:evenextreme} of the extreme points of  $\Omega_m^{t\& h}$  when  $m$ is even now furnish a complete characterization of the extreme points of $\Omega_n^{t\& h}$ 
when $n$ is even.

\end{document}